\theoremstyle{plain}
\newtheorem{thm}{Theorem}[section]
\newtheorem{theorem}{Theorem}[section]
\newtheorem{lemma}[thm]{Lemma}  
\newtheorem{prop}[thm]{Proposition}
\newtheorem{corollary}[thm]{Corollary} 
\theoremstyle{definition}
\newtheorem{defn}[thm]{Definition}
\newcommand{\la}{\lambda}
\numberwithin{equation}{section} \errorcontextlines=0
\def\al{\alpha}
\def\la{\lambda}
\def\ga{\gamma}
\def\ep{\epsilon}
\begin{document}
\title{Lattice structure of modular vertex algebras}

\author{Haihua Huang}
\address{School of Mathematics, South China University of Technology, Guangzhou, Guangdong 510640, China}
\email{hoiwaa@163.com}
\author{Naihuan Jing}
\address{Department of Mathematics, North Carolina State University, Raleigh, NC 27695, USA}
\email{jing@ncsu.edu}
\subjclass[2010]{Primary: 17B69; Secondary: 17B50, 20F29}
\keywords{vertex algebras, integral forms, modular vertex algebras, automorphism groups}
\thanks{Supported in part by Simons Foundation grant No. 523868} 
\bigskip
\thanks{$*$Corresponding author: jing@ncsu.edu}

\maketitle

\begin{abstract}
In this paper we study the integral form of the lattice vertex algebra $V_L$.
We show that divided powers of general vertex operators preserve the integral lattice spanned by Schur functions indexed by partition-valued functions. We also
show that the Garland operators, counterparts of divided powers of Heisenberg elements in affine Lie algebras, also
preserve the integral form. These construe analogs of
the Kostant $\mathbb Z$-forms for the enveloping algebras of simple Lie algebras and the algebraic affine Lie groups
in the situation of the lattice vertex algebras.
\end{abstract}

\section{Introduction }  

The axioms of vertex algebras (VA), including the Jacobi identity \cite{B86, FLM}, make sense for any commutative ring, thus VA over $\mathbb Z$ can be defined naturally. An integral form of a vertex operator algebra (VOA) has been studied in \cite{BR, B98} for the Monster VOA and
in general in \cite{DG1}. As a precursor it was treated as the Kostant $\mathbb Z$-form of the enveloping algebra \cite{S} in the case of affine Lie algebras \cite{G} and their level one modules \cite{P}. For the most important examples of the lattice VOA and affine VA, a special integral form is spanned by monomials in Schur polynomials \cite{M} indexed either by lattice elements or by simple roots of the finite dimensional Lie algebra.
Modular Virasoro VAs and affine VAs have shown to carry finitely many irreducible modules over the field of finite characteristic \cite{JLM}, and other important properties for
modular VOAs have been studied in \cite{AW, GL, DR, LM, Mu}.

In the classical approach to finite algebraic groups, an important lattice property plays a crucial role: all divided powers of
integral Chevalley basis elements preserve the integral form of the underlying enveloping algebra, which ensures the introduction of
Lie groups of finite type over the characteristic $p$. This property was first studied by Garland \cite{G} for the affine Lie algebra,
and then was generalized to the basic module of the
affine Lie algebra of simply laced type \cite{P} (see also \cite{Mc}).
Lusztig has developed the $U_{\mathbf A}(\mathfrak g)$-lattice structure for studying quantum groups at roots of unity \cite{L}, where $\mathbf{A}=\mathbb{C}[q,q^{-1}]$.
First in \cite{J2} for affine $sl(2)$ then in general \cite{CJ}
for affine ADE types, it was proved that the level one modules of the quantum affine algebra $\mathbf{U}_q(\hat{\mathfrak g})$
 admits an integral form as a $\mathbf{U}_{\mathbf{A}}(\hat{\mathfrak g})$-lattice in the sense
 of Lusztig. In particular, the lattice is closed under quantum divided powers of basis elements of $\mathbf{U}_q(\hat{\mathfrak g})$, which then facilitates the study of the modules over the root of unity.
 The {\it goal} of the paper is to show that there is a similar lattice structure on a large class of
 vertex operator algebras and their irreducible modules.

Let $V=S(\hat{\mathfrak h}_{L})\otimes \mathbb C\{L\}$ be the lattice VOA associated with an integral lattice $L$
of rank $r$. Suppose $L$ has an integral basis $\{\alpha_i|i=1, \ldots, r\}$. This also includes the case of super-VOAs when
$L$ is not even. The symmetric algebra $S(\hat{\mathfrak h}_{L})$ can be viewed as a ring of symmetric functions
in the variables $\alpha_i(-n)$, $n\in\mathbb N, r\in\{1, \cdots, r\}$. Let $s_{\la}(\alpha_k)$ be the Schur function in the $\alpha_k(-n)$ \cite{IJS}.
Then the algebra $V$ admits an integral form $V_{\mathbb Z}=S_{\mathbb Z}(\hat{\mathfrak h}_{L})\otimes \mathbb Z\{L\}$ (see \cite{DG1}), where
$S_{\mathbb Z}(\hat{\mathfrak h}_{L})$ consists of the $\mathbb Z$-span of Schur functions indexed by partition-valued function $\underline{\lambda}=(\lambda^{(1)}(\alpha_1), \ldots, \lambda^{(r)}(\alpha_r))\in\mathcal P^r$.
The algebra naturally decomposes itself into
$$ V=\bigoplus_{n\in\mathbb Z_+, \alpha\in L}V_{n, \alpha}
$$
where $V_{n, \alpha}$ is spanned by $s_{\underline{\lambda}}e^{\alpha}$, and $e^{\alpha}\in\mathbb Z\{L\}$, the integral group algebra of $L$. So the general elements of the vertex operator algebra $V$ are divided into two parts: the components of $Y(v, z)$, $v\in V_{n, \alpha}, \alpha\neq 0$ and the components of
$Y(v, z)$, $v\in V_{n, 0}$. It is known that the operators of the first kind can generate the second kind \cite{Mc} in the vertex operator algebra.

For the {\it elements of the first kind} $v\in V_{n, \alpha}, \alpha\neq 0$, we will prove in great generality that the divided powers of integral vertex algebra elements from $Y(v, z)$
preserve the integral form of the lattice VOA, as long as the lattice is integral.
We will also show that the integral form of any irreducible module of the lattice VOA is also preserved by the lattice of the integral VOA
(the later lattice structure is in Lusztig's sense).
As a consequence, we recover Garland's result for the affine Lie algebras \cite{G} and their irreducible modules \cite{P}.
Then for any $v\in V_{n, \alpha}$ ($\alpha\neq 0$), $Y(v, z)=\sum_{n\in\mathbb Z}v_nz^{-n-1}$, we can define the invertible map:
\begin{equation}
\exp(v_nt)=\sum_{i=0}^{\infty}\frac{(v_n)^i}{i!}t^i
\end{equation}
which preserves the integral form $V_{\mathbb Z}$ as well as its  irreducible modules as $\mathbb Z$-spaces. Note that the automorphism is in general sense as
the infinitesimal group clearly does not preserve the grading.

For the {\it elements of the second kind} $v\in V_{n, 0}$, they are analogs of the Cartan subalgebra in the affine Lie algebra. By generalizing
Garland's operator \cite{G} we introduce the following symmetric functions: for any $k\in\mathbb N$, $\alpha\in$ basis of $L$
\begin{equation}
\exp(\sum_{n=1}\frac{\alpha(-kn)}{n}z^n)=\sum_{n=0}^{\infty}h_{-\alpha, n}^{[k]}z^n
\end{equation}
and we will show that $h_{-\alpha, n}^{[k]}$ preserves the integral lattice $(V_L)_{\mathbb Z}$.
When $L$ is the root lattice of ADE type, $V_L$ reduces to the basic module of the affine Lie algebra \cite{FLM}. In such a situation,
our proof also offers a different and simpler proof of Garland's results.

The paper is organized as follows. In Section 2, we discuss vertex operator algebras and symmetric functions,
and construct the integral forms
spanned by Schur functions. In Section 3, we derive some basic relations about vertex algebras and study the integral forms based on
integral lattices. An important technical lemma will be given, which generalizes a result of \cite{BFJ} and \cite{CJ}.
In Section 4, we will show that the divided powers of general vertex operators preserve the integral form
of the lattice vertex operator algebra as well as their irreducible modules $M_{\mathbb Z}$ over $\mathbb Z$. Moreover
the maps $\exp(v_nt)$ are in $\mathrm{GL}_{\mathbb Z}(V_{\mathbb Z})$ and $\mathrm{GL}_{\mathbb Z}(M_{\mathbb Z})$, and one can study modular vertex algebras
of lattice type and their irreducible modules.

\section{Vertex algebras and symmetric functions} 

We first recall the notion of the vertex algebra (VA).
A vertex algebra $V$ is a graded vector space $V=\oplus_{n\in\mathbb Z} V_n$ equipped with the state-field correspondence
map $Y_V(\cdot, z): V\longrightarrow \mathrm{End}(V)[[z, z^{-1}]]$, and $Y_V(u, z)=\sum_n u_nz^{-n-1}$ which defines
a sequence of vertex algebra products $u_nv$ for any $u, v\in V, n\in\mathbb Z$ that satisfies several defining
properties of the vertex algebras, see \cite{FLM, FHL} or \cite{LL} 
for details. In short, the vertex algebra $V$ is a non-associative
algebra equipped with
infinitely many products $u_nv$ that enjoy the local commutativity and associativity of the
vertex algebra.

An integral form $W_{\mathbb{Z}}$ of a finite-dimensional vector space $W$ is a $\mathbb Z$-span of its basis. If $V=\oplus_{n\in\mathbb Z} V_n$
is a graded algebra with finite dimensional homogeneous subspaces, then an integral form $V_{\mathbb Z}$ is a $\mathbb Z$-subspace $V_{\mathbb Z}$ such that $V_{\mathbb Z}\cap V_n$ are integral forms of $V_n$ and the product is closed under $V_{\mathbb Z}$.

For any nonnegative integer $r$ and an operator $x$, we define $x^{(r)}=\frac{x^r}{r!}$. Now let $U_{\mathbb Z}(V)$ be the $\mathbb Z$-linear span
of $(u_1)_{n_1}^{(r_1)}(u_2)_{n_2}^{(r_2)}\cdots (u_k)_{n_k}^{(r_k)}$, where $u_1, u_2, \cdots u_k\in V_{\mathbb Z}$ and $n_i\in \mathbb Z_+$.
In general, $U_{\mathbb Z}(V)$ may belong to the completion of the algebra and is the integral form of the vertex enveloping algebra $U(V)$ defined in \cite{FZ}.

\begin{defn} \rm
An integral form $V_{\mathbb{Z}}$ of a vertex algebra $V$ is an integral basis of $V$
as a graded vector space such that it contains the vacuum vector $\textbf{1}$ and is closed under vertex algebra products.
\end{defn}
Here the closeness means that the product of any two basis elements is a $\mathbb Z$-linear combination of the basis elements.
If the vertex algebra $V$ has a conformal element $\omega$, we will not require an integral form of $V$ to contain $\omega$ as it will
spoil many properties of the integral form.

Let $\mu=\{\mu_1, \mu_2, \cdots ,\mu_k\}$ be a sequence or composition of integers, $\lambda=(\lambda_1,\lambda_2,\cdots, \lambda_s)$ is called a subsequence of $\mu$ if its coordinates are part of those of $\{\mu_1, \mu_2, \cdots ,\mu_k\}$ (with the same order), the remaining part of
the sequence is denoted by $^c{\lambda}$ and called
the complementary subsequence. When $\mu_i$ are non-ascending sequence of integers: $\mu_1\geq \ldots\geq \mu_l>0$, then $\mu=(\mu_1, \ldots, \mu_l)$
is called a partition with length $l$. The weight of $\mu$ is defined to be $\sum_i\mu_i$. Sometimes we also arrange the parts of $\mu$ in ascending way $\mu=(1^{m_1}2^{m_2}\cdots)$, where
$m_i$ is the multiplicity of the part $i$ in $\mu$.

Let $L$ be an even non-degenerate integral lattice of rank $d$ provided with a nondegenerate symmetric $\mathbb Z$-bilinear form $\langle\cdot,\cdot\rangle$,  such that the set
$\Delta=\{ \ep_1,\ep_2,\cdots,\ep_d\}$ is a $\mathbb Z$-basis of $L$.
Let
$\mathfrak{h}=\mathfrak{h}_L$ be the complexified abelian Lie algebra $\mathbb C\otimes L$, and bilinear form
is naturally extended to $\mathfrak h$.  The Heisenberg algebra is the infinite-dimensional Lie algebra
$$\hat{\mathfrak h}_L=\coprod \limits_{n\in \mathbb{Z^{\times}}}\mathfrak{h}\otimes t^n\oplus \mathbb{C}k$$
with the commutation relation
\begin{equation}
[h_1(m), h_2(n)]=m\langle h_1, h_2\rangle\delta_{m, -n}k
\end{equation}
where we set $h(m)=h\otimes t^m$.

Let the twisted group algebra $\mathbb{C}\{L\}$ be the space generated by $e^{\alpha}$, $\alpha\in L$
with the multiplication
$$
e^{\alpha}e^{\beta}=\varepsilon(\alpha, \beta)e^{\alpha+\beta}
$$
where $\varepsilon( \ , \ )$ is the cocycle on $L$ defined by the central extension
\begin{equation*}
  1\rightarrow \langle \kappa | \kappa^2=1\rangle \hookrightarrow \hat{L} \rightarrow L \rightarrow 0,
\end{equation*}
so that $e^{\alpha}e^{\beta}=(-1)^{\langle \alpha, \beta\rangle}e^{\beta}e^{\alpha}$.
We remark that if $L$ is not even, one can choose a central extension of $L$ by the cyclic group
$\langle \kappa | \kappa^s=1\rangle$ so that $e^{\alpha}e^{\beta}=(-1)^{\langle \alpha, \beta\rangle+\langle \alpha, \alpha\rangle\langle \beta, \beta\rangle}e^{\beta}e^{\alpha}$. In the sequel, we will mainly work in the even case though many results
also hold in the super case.

View $\mathbb{C}\{L\}$ as a trivial $\hat{\mathfrak{h}}_{\mathbb{Z}}$-module and for $\alpha \in \mathfrak{h}$, define an action of $\mathfrak{h}$ on $\mathbb{C}\{L\}$ by:
$\alpha(0)\cdot e^\eta=\langle \alpha,\eta\rangle  e^\eta$, for $\eta \in L$.

Let $\hat{\mathfrak{h}}^{\pm}=\mathfrak{h}\otimes t^{\pm 1}\mathbb{C}[t^{\pm 1}]$
be the subalgebras of $\hat{\mathfrak h}$,
then $\hat{\mathfrak{h}}$ has the triangular decomposition:
$\hat{\mathfrak{h}}=\hat{\mathfrak{h}} ^+\oplus \hat{\mathfrak{h}}^-\oplus \mathbb{C} k$.
Let $S(\hat{\mathfrak{h}}^-)=M(1)$ be  the symmetric algebra generated by $\hat{\mathfrak{h}}^-$, which is naturally a
$\hat{\mathfrak{h}}$-module with $k=1$. Set
\begin{equation}
V_L=M(1)\otimes \mathbb{C}\{L\},
\end{equation}
and let $\mathbf{1}=1\otimes 1$ be the vacuum vector.
 Since both $S(\hat{\mathfrak{h}}^-)$ and $\mathbb{C}\{L\}$ are $\hat{\mathfrak{h}}$-module, we have  $V_L$ is an $\hat{\mathfrak{h}}$-module via the tensor product action.

The lattice vertex operator algebra $V_L$ \cite{FLM, LL}
is specified by the state-field correspondence $v\mapsto Y(v, z)$. 
For $\alpha \in \mathfrak{h}$, we define the operator $z^{\alpha(0)}$ on the space $V_L$:
$z^{\alpha(0)} \cdot (x\otimes e^\eta)=z^{\langle\alpha, \eta\rangle}(x\otimes e^\eta)$,
for $x\in S(\hat{\mathfrak{h}}_{\mathbb{Z}} ^-), \eta\in L$.
Define 
\begin{equation}\label{e:st-field}
  Y(e^\alpha,z)=E^-(-\alpha,z)E^+(-\alpha,z)e^\alpha z^{\alpha(0)}
\end{equation}
where
 \begin{align}
 E^+(-\alpha,z)&=\exp\bigg(-\sum\limits_{n\in {\mathbb{Z}_+}}\frac{\alpha(n)}{n}z^{-n}\bigg) , \\ \label{e:hom}
 E^-(-\alpha,z)&=\exp\bigg( \sum\limits_{n\in {\mathbb{Z}_+}}\frac{\alpha(-n)}{n}z^n\bigg)=
                  \sum\limits_{n\geq0}h_{\alpha,-n}z^n.
 \end{align}

For $\alpha \in \mathfrak{h}$ we set
$$\alpha(z)=\sum\limits_{n\in \mathbb{Z}}\alpha(n)z^{-n-1}, $$
and for the general basis element
$v=\alpha_1(-n_1)\cdots \alpha_k(-n_k)e^{\ga} \in V_L$,
$\gamma, \alpha_i \in \mathfrak{h},n_i,k\geq 1$, we define
\begin{align*}
  Y(v,z)&=\sum \limits_{n\in \mathbb{Z}}v_nz^{-n-1} \\
          &=\,: \left(\partial^{(n_1-1)}\alpha_1(z)
                        \cdots
                        \partial^{n_k-1}\alpha_k(z)\right)
                         Y(e^{\ga},z) :,
\end{align*}
where $\partial^{(n)}=\frac1{n!}(\frac{\partial}{\partial z})^n$ and $:\ \ :$ is the normal ordered product.

Let $L^{\circ}$ be the dual lattice of $L$, so the space
\begin{equation}
V_{L^{\circ}}=S(\hat{\mathfrak h}^-)\otimes \mathbb{C}\{L^{\circ}\}
\end{equation}
is similarly defined as in $V_L$ (see \cite{LL}) and
is naturally a $V_L$-module with the action given by the state-field map \eqref{e:st-field}.
Let $\{\gamma_i\}$ be the set of coset representatives of $L^{\circ}/L$. It is known \cite{D, LL} that $V_{L^{\circ}}$ decomposes into irreducible $V_{L}$-modules:
\begin{equation}
V_{L^{\circ}}=\bigoplus_{i=1}^{|L^{\circ}/L|}S(\hat{\mathfrak h}^-)\otimes \mathbb{C}\{L+\gamma_i\},
\end{equation}
and the irreducible components exhaust all irreducible $V_L$-modules.

For a partition $\lambda=(\lambda_1,\lambda_2,\cdots, \lambda_k)$,
define the homogeneous symmetric function $h_{\alpha, -\lambda}$ by
\begin{equation*}
h_{\alpha,-\lambda}=h_{\alpha,-\lambda_1}h_{\alpha,-\lambda_2}\cdots h_{\alpha,-\lambda_k}.
\end{equation*}
whose generating function is $E(-\alpha, z_1)\cdots E(-\alpha, z_k)$.

For partition $\lambda$ of length $l$, the Schur function $s_{\alpha, -\lambda}$ is defined by the Jacobi-Trudi formula
\begin{equation}
s_{\alpha, -\lambda}=\det(h_{\alpha, -\lambda_i+i-j})_{l\times l}
\end{equation}

It is well-known that $\alpha(-n)$ is an integral linear combination of $h_{\alpha, -\lambda}$, $|\lambda|=n$.
The homogenous function $h_{\alpha, -n}$ is an integral linear combination of the Schur functions $s_{\alpha, -\mu}$, where
$|\mu|=|\lambda|$, and vice versa.

Moreover, Schur functions can be created by vertex operators.
For $\alpha\in L$ we introduce the following vertex operator \cite{J2}
\begin{equation}
S(\alpha, z)=E^-(-\alpha, z)E^+(-\frac{\alpha}2, z)=\sum_{n\in\mathbb Z} S(\alpha)_n z^{-n}
\end{equation}

\begin{prop} \cite{J1} For each partition $\lambda=(\lambda_1, \ldots, \lambda_l)$, the
Schur function $s_{\lambda}$ in the variable $\alpha(-n)$, viewed as the power sum $p_n=\sum_i x_i^n$, is given by
\begin{equation}
s_{\alpha, -\lambda}=S(\alpha)_{-\lambda_1}S(\alpha)_{-\lambda_2}\cdots S(\alpha)_{-\lambda_l}.1
\end{equation}
\end{prop}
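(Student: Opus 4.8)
\emph{Proof proposal.} The plan is to evaluate the generating series $S(\alpha,z_1)\cdots S(\alpha,z_l)\mathbf 1$ in closed form and then read off the coefficient of $z_1^{\lambda_1}\cdots z_l^{\lambda_l}$: since $S(\alpha,z)=\sum_n S(\alpha)_nz^{-n}$, that coefficient is exactly $S(\alpha)_{-\lambda_1}\cdots S(\alpha)_{-\lambda_l}\mathbf 1$. The first ingredient is a Heisenberg--Weyl contraction. Writing $E^{\pm}$ through their exponents and using that the commutator of these exponents is the scalar $\log(1-z_j/z_i)=-\sum_{n\ge 1}\frac1n(z_j/z_i)^n$, the identity $e^Ae^B=e^{[A,B]}e^Be^A$ (valid since $[A,B]$ is central) gives
\begin{equation*}
E^+(-\tfrac{\alpha}{2},z_i)\,E^-(-\alpha,z_j)=\bigl(1-z_j/z_i\bigr)\,E^-(-\alpha,z_j)\,E^+(-\tfrac{\alpha}{2},z_i).
\end{equation*}
Here and below I use the normalization in which $\langle\alpha,\alpha\rangle=2$, so that $\alpha(-n)$ is the power sum $p_n$ and $\alpha(n)$ the dual derivation $\partial_{p_n}$; this is precisely what the factor $-\alpha/2$ in $S(\alpha,z)$ is calibrated for, and it is why the contraction factor is the first power of $1-z_j/z_i$.

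Next I would normally order the whole product. Starting from $S(\alpha,z_1)\cdots S(\alpha,z_l)=E^-(-\alpha,z_1)E^+(-\tfrac\alpha2,z_1)\cdots E^-(-\alpha,z_l)E^+(-\tfrac\alpha2,z_l)$ and moving each $E^+(-\tfrac\alpha2,z_i)$ to the right past the later factors $E^-(-\alpha,z_j)$, $j>i$, the contraction above produces the scalar $\prod_{1\le i<j\le l}(1-z_j/z_i)$ and leaves the normally ordered product $E^-(-\alpha,z_1)\cdots E^-(-\alpha,z_l)\,E^+(-\tfrac\alpha2,z_1)\cdots E^+(-\tfrac\alpha2,z_l)$. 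Applying this to the vacuum $\mathbf 1=1\otimes 1$, every $E^+(-\tfrac\alpha2,z_i)$ fixes $\mathbf 1$ because the positive modes $\alpha(n)$ annihilate $\mathbf 1$, while the product of the $E^-$'s acting on $\mathbf 1$ is the generating function $\sum_{n_1,\dots,n_l\ge 0}h_{\alpha,-n_1}\cdots h_{\alpha,-n_l}\,z_1^{n_1}\cdots z_l^{n_l}$ recorded above for the $h_{\alpha,-\mu}$. Hence
\begin{equation*}
S(\alpha,z_1)\cdots S(\alpha,z_l)\mathbf 1=\Bigl(\prod_{1\le i<j\le l}(1-z_j/z_i)\Bigr)\ \sum_{n_1,\dots,n_l\ge 0}h_{\alpha,-n_1}\cdots h_{\alpha,-n_l}\,z_1^{n_1}\cdots z_l^{n_l}.
\end{equation*}

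Finally I would expand the prefactor by the Vandermonde identity, $\prod_{i<j}(1-z_j/z_i)=\bigl(\prod_{i<j}(z_i-z_j)\bigr)/\prod_{k}z_k^{\,l-k}=\sum_{\sigma\in\mathfrak S_l}\operatorname{sgn}(\sigma)\prod_i z_i^{\,i-\sigma(i)}$, and take the coefficient of $z_1^{\lambda_1}\cdots z_l^{\lambda_l}$. For a fixed $\sigma$ the exponent matching forces $n_i=\lambda_i-i+\sigma(i)$, so the coefficient equals $\sum_{\sigma}\operatorname{sgn}(\sigma)\prod_i h_{\alpha,-(\lambda_i-i+\sigma(i))}=\det\bigl(h_{\alpha,-\lambda_i+i-j}\bigr)_{l\times l}$, where terms with some $\lambda_i-i+\sigma(i)<0$ drop out because $h_{\alpha,m}=0$ for $m>0$ (and $h_{\alpha,0}=1$). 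By the Jacobi--Trudi definition this determinant is $s_{\alpha,-\lambda}$, which is the asserted identity. There is no deep obstacle here --- this is the classical vertex operator realization of Schur functions of \cite{J1} --- and the only points needing care are bookkeeping ones: the direction in which $1-z_j/z_i$ is expanded when the operators are reordered, and matching the row/column conventions so that the signed permutation sum is recognized as the stated Jacobi--Trudi determinant.
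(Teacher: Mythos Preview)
Your argument is correct and is exactly the standard vertex-operator derivation of the Jacobi--Trudi identity: normally order the product $S(\alpha,z_1)\cdots S(\alpha,z_l)$ using the contraction $E^+(-\tfrac{\alpha}{2},z_i)E^-(-\alpha,z_j)=(1-z_j/z_i)E^-(-\alpha,z_j)E^+(-\tfrac{\alpha}{2},z_i)$ (which uses $\langle\alpha,\alpha\rangle=2$), kill the annihilation part on the vacuum, expand the resulting factor $\prod_{i<j}(1-z_j/z_i)=\sum_{\sigma}\operatorname{sgn}(\sigma)\prod_i z_i^{\,i-\sigma(i)}$, and match exponents to get $\det(h_{\alpha,-\lambda_i+i-j})$.

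The paper itself does not supply a proof of this proposition; it is stated as a quotation from \cite{J1}. Your write-up is precisely the argument one finds there, so there is nothing to compare against in the present paper. The only comment worth making is that the hypothesis $\langle\alpha,\alpha\rangle=2$ is implicit in the very definition of $S(\alpha,z)=E^-(-\alpha,z)E^+(-\tfrac{\alpha}{2},z)$ (the $\tfrac{\alpha}{2}$ is chosen so that the contraction exponent $\langle\tfrac{\alpha}{2},\alpha\rangle$ equals $1$); you were right to flag this, since for general $\alpha\in L$ the operator $S(\alpha,z)$ as written would produce $(1-z_j/z_i)^{\langle\alpha,\alpha\rangle/2}$ and the identification with Schur functions would fail.
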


Let $\underline{\lambda}=(\lambda^{(1)}, \ldots, \lambda^{(d)})$ be a multi-partition with weight $n$, we define the
multivariate or tensor product Schur function $s_{\underline{\lambda}}$ \cite{IJS} as
$$
s_{\beta_1, -\lambda^{(1)}}s_{\beta_2, -\lambda^{(2)}}\cdots s_{\beta_m, -\lambda^{(m)}}.
$$
whose generating function is
\begin{equation}\label{e:genSchur1}
S(\beta_1, z_1)\cdots S(\beta_1, z_{l(\la^{(1)})})S(\beta_2, w_1)\cdots S(\beta_2, w_{l(\la^{(2)})})\cdots S(\beta_m, t_1)\cdots S(\beta_1, t_{l(\la^{(m)})}).1
\end{equation}
According to \cite{IJS}, the above is a $\mathbb Z$-linear combination of tensor products of homogeneous symmetric functions. Therefore we can use the following generating series for the tensor product Schur function:
\begin{equation}\label{e:genSchur2}
E^-(-\ga_1, z_1)E^-(-\ga_2, z_2)\cdots E^-(-\ga_l, z_l)
\end{equation}
where $\ga_i$ are a sequence of vectors (with multiplicity) in $L$.

\section{integral forms of lattice vertex operator algebras }  

Let $\underline{\lambda}=(\lambda^{(1)}, \ldots, \lambda^{(d)})$ be a multi-partition with weight $n$. The $\mathbb Z$-span
of Schur functions
$$s_{\beta_1, -\lambda^{(1)}}s_{\beta_2, -\lambda^{(2)}}\cdots s_{\beta_m, -\lambda^{(m)}}e^{\eta},$$
where $\beta_i\in\Delta$,
$\eta \in L$
can be simply written as a $\mathbb Z$-span of the following elements:
\begin{equation}\label{e:wrSchur}
s_{\beta_1',-n_1}s_{\beta_2',-n_2}\cdots s_{\beta_k',-n_k}\otimes e^\eta,
\end{equation}
where $\beta_i', \eta\in \Delta$ and $n=\sum_i{n_i}$ such that $n_1\geq n_2\geq \cdots \geq n_k\geq 0$. For convenience, we will stick this type of Schur elements
and note that the generating series is also given by appropriate products of the exponential operators
given in \eqref{e:genSchur2}.

Let $(V_L)_{\mathbb{Z}}$ be the $\mathbb Z$-span of
the elements in \eqref{e:wrSchur}. We also define $(V_{L^{\circ}})_{\mathbb Z}$ and $(V_{L+\gamma_i})_{\mathbb Z}$ similarly by the
same span (with $e^{\eta}$ being in $\mathbb C\{L^{\circ}\}$ or $\mathbb C\{L+\gamma_i\}$ respectively).
The following theorem is essentially from \cite{DG1} (see also \cite{Mc}).
\begin{thm}
The space $(V_L)_{\mathbb{Z}}$
is an integral form of $V_L$ (or $(V_{L+\gamma_i})_{\mathbb Z}$)) generated by $e^{\pm \alpha_i}$, $\alpha_i\in \Delta$.
Moreover, the space $(V_{L+\gamma_i})_{\mathbb Z}$
is an integral form of $V_{L+\gamma_i}$ generated by the action of $Y(e^{\pm \alpha_i}, z)$, $\alpha_i\in \Delta$
on the vacuum vector $e^{\gamma_i}$.
\end{thm}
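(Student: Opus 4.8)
The plan is to verify the two defining properties of an integral form separately: (i) that $(V_L)_{\mathbb Z}$ is a full-rank $\mathbb Z$-lattice meeting each graded piece $(V_L)_n$ in an integral form, and (ii) that it is closed under all vertex algebra products $u_m v$, and finally (iii) that it is generated, as such a form, by the elements $e^{\pm\alpha_i}$ under the vertex operator action. For (i), I would recall the classical fact (from \cite{M}, used already in the excerpt) that for a single variable set $\alpha(-n)$, $n\ge 1$, the Schur functions $s_{\alpha,-\lambda}$ form a $\mathbb Z$-basis of the ring $S_{\mathbb Z}(\hat{\mathfrak h}^-_\alpha)$, and that tensoring over the basis elements $\beta_i\in\Delta$ gives a $\mathbb Z$-basis of $S_{\mathbb Z}(\hat{\mathfrak h}^-)$; tensoring further with the $\mathbb Z$-basis $\{e^\eta:\eta\in L\}$ of $\mathbb Z\{L\}$ produces exactly the spanning set \eqref{e:wrSchur}. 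Since these elements are homogeneous for the natural grading and their number in each degree matches $\dim (V_L)_n$, they form an integral basis degree by degree; the same argument gives the statement for $(V_{L+\gamma_i})_{\mathbb Z}$, using the $\mathbb Z$-basis $\{e^\eta:\eta\in L+\gamma_i\}$.

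For (ii), closure under products, I would reduce everything to products of the generating series. By the explicit formula \eqref{e:st-field} and the normal-ordering description of $Y(v,z)$ for a general basis vector $v$, every component operator $v_m$ acting on a basis element $s_{\underline{\mu}}\otimes e^\eta$ is obtained by extracting a coefficient from a product of the exponential operators $E^-(-\gamma,z)$, $E^+(-\gamma,z)$, the shift $e^\alpha$, and the power $z^{\alpha(0)}$; the factor $z^{\alpha(0)}$ contributes only integer powers $z^{\langle\alpha,\eta\rangle}$, and $e^\alpha$ contributes only signs $\varepsilon(\alpha,\eta)=\pm1$. The key point is then that $E^+(-\gamma,z)$ acts on the polynomial part by the ``vertex operator'' realization of multiplication/skewing by homogeneous symmetric functions, which is integral (the $h_{\gamma,-n}$ and their adjoints preserve $S_{\mathbb Z}$, a standard fact quoted in the excerpt just before Proposition on Schur functions), while $E^-(-\gamma,z)$ multiplies by $\sum_n h_{\gamma,-n}z^n$, also integral. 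One subtlety is the interaction term $z_i^{\langle\gamma_i,\gamma_j\rangle}$ arising when an $E^+$ is moved past an $E^-$ (the standard commutation $E^+(-\gamma_i,z_i)E^-(-\gamma_j,z_j)=(1-z_j/z_i)^{\langle\gamma_i,\gamma_j\rangle}E^-(-\gamma_j,z_j)E^+(-\gamma_i,z_i)$): since $L$ is integral, $\langle\gamma_i,\gamma_j\rangle\in\mathbb Z$, so $(1-z_j/z_i)^{\langle\gamma_i,\gamma_j\rangle}$ expands with integer coefficients in the appropriate domain. Assembling these integral pieces shows that the coefficient of any monomial in the $z$'s, applied to a basis vector of \eqref{e:wrSchur}, lands back in the $\mathbb Z$-span of \eqref{e:wrSchur}; the appearance of $e^{\pm\alpha_i}$ only among the generators is automatic since $Y(\mathbf 1,z)=\mathrm{id}$ and the vacuum lies in the form.

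For (iii), to see that $(V_L)_{\mathbb Z}$ is \emph{generated} by the $e^{\pm\alpha_i}$ (equivalently, $(V_{L+\gamma_i})_{\mathbb Z}$ by the action of $Y(e^{\pm\alpha_i},z)$ on $e^{\gamma_i}$), I would argue that repeated products of the $Y(e^{\pm\alpha_i},z)$ already produce, in their components acting on $e^\eta$, all the operators $h_{\beta,-n}$ (hence all Schur functions $s_{\beta,-n}$) with integer coefficients: this is the content of the remark in the introduction that ``operators of the first kind generate the second kind'' \cite{Mc}, together with the fact that $Y(e^{\alpha_i},z)Y(e^{-\alpha_i},z')$, after extracting the leading term, yields the Heisenberg field $\alpha_i(z)$ and its divided-power repackagings. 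Since by (i) the elements \eqref{e:wrSchur} span $(V_L)_{\mathbb Z}$ and each is of the form $s_{\beta_1',-n_1}\cdots s_{\beta_k',-n_k}\otimes e^\eta$, and each $s_{\beta',-n}$ is obtained integrally from the $e^{\pm\beta'}$-components while $e^\eta=e^{\eta_1}\cdots$ is a product of the $e^{\pm\alpha_i}$ (up to signs $\varepsilon=\pm1$), the whole form is generated as claimed.

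The main obstacle I anticipate is controlling integrality of the coefficients coming from the binomial factors $(1-z_j/z_i)^{\langle\gamma_i,\gamma_j\rangle}$ and the powers $z^{\alpha(0)}$ simultaneously: one must be careful about which expansion region (i.e.\ $|z_i|>|z_j|$ versus the opposite) is used, ensure the choices are globally consistent when several operators are composed, and check that no denominators are introduced when these binomials have \emph{negative} integer exponents (here $\langle\gamma_i,\gamma_j\rangle$ can be negative, but the binomial series $(1-x)^{-m}=\sum_k \binom{m+k-1}{k}x^k$ still has integer coefficients). Modulo this bookkeeping, which is exactly the content of the ``important technical lemma'' promised for Section 3, the result follows; indeed the theorem is stated as being essentially \cite{DG1}, so the present proof is mostly a matter of recording the Schur-function packaging in the form \eqref{e:wrSchur}.
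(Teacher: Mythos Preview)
Your proposal is correct and follows the same route as the paper: reduce the Schur-function spanning set \eqref{e:wrSchur} to the homogeneous symmetric functions $h_{\beta',-m}$ via the integral Jacobi--Trudi transition, recognize the latter as coefficients of products $E^-(-\beta'_1,w_1)\cdots E^-(-\beta'_l,w_l)\otimes e^\eta$, and then invoke \cite{DG1}. The paper's actual proof is much terser---it records only that reduction and cites \cite{DG1} for closure and generation---whereas you have unpacked parts (ii) and (iii) of what that citation contains; your bookkeeping about the binomial factors $(1-w/z)^{\langle\gamma_i,\gamma_j\rangle}$ is exactly the mechanism behind the cited result.
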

\begin{proof} It is enough to show the theorem for $(V_L)_{\mathbb{Z}}$.
By the relation between Schur functions and the homogeneous symmetric functions, we see that the basis
 elements $s_{\beta_1,-n_1}s_{\beta_2,-n_2}\cdots s_{\beta_k,-n_k}\otimes e^\eta$, $\beta_i\in\Delta$
 can be expressed as integral linear combinations of $h_{\beta_1',-m_1}h_{\beta_2',-m_2}\cdots h_{\beta_l',-m_l}\otimes e^\eta$,
$\beta_i'\in\Delta$, where both partitions $(n_1, \ldots, n_k)$ and
 $(m_1, \ldots, m_l)$ are partitions of the same weight.
 The latter are the coefficients of $E^-(-\beta'_1,w_1)\cdots$ $E^-(-\beta'_l,w_l)\otimes e^\eta$.
 Then the result follows from \cite{DG1}.
\end{proof}

Let $\delta=(\delta_1,\delta_2,\cdots,\delta_r)=(r-1,r-2,\cdots,0)
\in \mathbb{Z}^r$ be the special partition, 
$l(\sigma)$ is the inverse number of the permutation $\sigma$. For any r-tuple $t=(t_1,t_2,\cdots, t_r), t_i\in \mathbb{Z}$,
let $R$ be a commutative ring, and let the space of truncated formal Laurent series be
$$R((z_1,z_2,\cdots,z_r))=
\bigg\{ \sum \limits_{t}a_tz^t=\sum \limits_{t}a_{t_1t_2\cdots t_r}z_1^{t_1}z_2^{t_2}\cdots z_r^{t_r}\Big|
a_{t_1t_2\cdots t_r}\in R, a_{t_1t_2\cdots t_r}=0 \ \mbox{for $t_i<<0$} 
\bigg\}.$$
The symmetric group $S_r$ acts on the formal Laurent series by permuting their variables, i.e.  for $\sigma\in S_r$,
 $$\sigma.f(z_1, \cdots, z_r)=f(z_{\sigma^{-1}(1)}, \cdots, z_{\sigma^{-1}(r)}), \qquad\quad f\in R((z_1,z_2,\cdots,z_r)).$$

\begin{lemma}\label{r!} If
 $G\in R((z_1,z_2,\cdots,z_r))$
is invariant under the action of the symmetric group $S_r$. Then for all $n \in \mathbb{Z}, k\in \mathbb{N}$, the coefficient of
$(z_1z_2\cdots z_r)^n$ in $\prod \limits_{1\leq i<j\leq r}(z_i-z_j)^kG$ is divisible by $r!$.
\end{lemma}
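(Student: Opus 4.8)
The plan is to expand the factor $\prod_{1\le i<j\le r}(z_i-z_j)^k$ into monomials and then exploit a free diagonal action of $S_r$ on the resulting index set. Recall the Vandermonde determinant identity $\prod_{1\le i<j\le r}(z_i-z_j)=\sum_{\sigma\in S_r}(-1)^{l(\sigma)}z^{\sigma\cdot\delta}$, where $\sigma\cdot\delta=(\delta_{\sigma^{-1}(1)},\dots,\delta_{\sigma^{-1}(r)})$ is the corresponding permutation of the staircase $\delta=(r-1,\dots,1,0)$ and $z^{\mu}=z_1^{\mu_1}\cdots z_r^{\mu_r}$. Raising to the $k$-th power and writing $G=\sum_{\nu}c_{\nu}z^{\nu}$ (all $c_{\nu}\in R$, and $c_{\nu}=c_{\tau\cdot\nu}$ for every $\tau\in S_r$ by hypothesis), the coefficient of $(z_1\cdots z_r)^n=z^{(n,\dots,n)}$ in $\prod_{i<j}(z_i-z_j)^kG$ equals the \emph{finite} sum
\begin{equation*}
C=\sum_{(\sigma_1,\dots,\sigma_k)\in S_r^{\,k}}(-1)^{l(\sigma_1)+\cdots+l(\sigma_k)}\,c_{(n,\dots,n)-\sigma_1\cdot\delta-\cdots-\sigma_k\cdot\delta}.
\end{equation*}
(The sum is finite because $\prod_{i<j}(z_i-z_j)^k$ is a polynomial, so only finitely many monomials of $G$ can contribute to a fixed target monomial.)

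The key step is then to let $S_r$ act on the index set $S_r^{\,k}$ diagonally by $g\cdot(\sigma_1,\dots,\sigma_k)=(g\sigma_1,\dots,g\sigma_k)$. Reading off the first coordinate shows this action is free, so $S_r^{\,k}$ decomposes into orbits each of cardinality $r!$. Fixing a representative $(\sigma_1,\dots,\sigma_k)$ of an orbit, I would compute that orbit's contribution to $C$ as a sum over $g\in S_r$. Two facts collapse this sum: first, $l(g\sigma_1)+\cdots+l(g\sigma_k)\equiv k\,l(g)+l(\sigma_1)+\cdots+l(\sigma_k)\pmod 2$; second, since the coordinate action of $S_r$ on $\mathbb Z^r$ is linear and fixes the constant vector $(n,\dots,n)$, one has $(n,\dots,n)-\sum_i g\sigma_i\cdot\delta=g\cdot\big((n,\dots,n)-\sum_i\sigma_i\cdot\delta\big)$, whence $S_r$-invariance of $G$ gives $c_{g\cdot\mu}=c_{\mu}$. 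Therefore the orbit contributes
\begin{equation*}
\Big(\sum_{g\in S_r}(-1)^{k\,l(g)}\Big)\,(-1)^{l(\sigma_1)+\cdots+l(\sigma_k)}\,c_{(n,\dots,n)-\sigma_1\cdot\delta-\cdots-\sigma_k\cdot\delta}.
\end{equation*}

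Finally, $\sum_{g\in S_r}(-1)^{k\,l(g)}$ equals $r!$ when $k$ is even and $0$ when $k$ is odd (for $r\ge2$; the case $r=1$ is trivial, the product over $i<j$ being empty), so in either case it is divisible by $r!$. Summing over the orbits shows $r!\mid C$, which is the assertion. The one genuinely substantive idea here is the choice of the free diagonal $S_r$-action together with the observation that the sign twist and the index shift conspire so that each orbit contributes the universal factor $\sum_{g}(-1)^{k\,l(g)}$; everything else is routine bookkeeping. (For odd $k$ there is an even quicker route: $\prod_{i<j}(z_i-z_j)^kG$ is then antisymmetric under $S_r$, so its coefficient at $(z_1\cdots z_r)^n$ — a monomial fixed by every transposition of two variables — must vanish.)
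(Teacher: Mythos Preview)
Your proof is correct. Both your argument and the paper's rest on the same observation---the target monomial $(z_1\cdots z_r)^n$ is $S_r$-fixed, so the (anti)symmetry induced by the Vandermonde factors forces the coefficient to be $0$ or an $r!$-multiple---but the organization differs. The paper first absorbs an even power $\prod_{i<j}(z_i-z_j)^{2q}$ into $G$ (this factor being symmetric) to reduce to $k\in\{1,2\}$, and then treats each base case separately: for $k=1$ the product is antisymmetric and the coefficient vanishes, while for $k=2$ one rewrites $\prod_{i<j}(z_i-z_j)^2=\sum_{\sigma\in S_r}\sigma\cdot\bigl(z^{\delta}\prod_{i<j}(z_i-z_j)\bigr)$ as an explicit $S_r$-symmetrization and reads off the factor $r!$ directly. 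Your route instead expands all $k$ Vandermonde determinants simultaneously and uses the free diagonal $S_r$-action on $S_r^{\,k}$ to collect a universal factor $\sum_{g}(-1)^{k\,l(g)}$ from each orbit; this handles every $k$ uniformly with no reduction or case split. Your argument is a bit slicker in that sense, while the paper's is slightly more concrete in the two base cases; the parenthetical shortcut you give for odd $k$ is exactly the paper's $k=1$ argument.
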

\begin{proof} If $k>3$, write $k=2q+k_0$ with $k_0=0,1$ and $q\geq 1$. Then
$\prod \limits_{1\leq i<j\leq r}(z_i-z_j)^{2q}G$ is clearly invariant under $S_r$. So it is enough to show the result for $k=1, 2$.

First of all,
\begin{align}\label{e:Van}
\prod\limits_{1\leq i<j\leq r}(z_i-z_j)=\sum\limits_{\sigma\in S_r}(-1)^{l(\sigma)}z^{\sigma(\delta)}
\end{align}
where $\sigma$ runs through all permutations of $S_r$, $\delta=(r-1, \ldots, 1, 0)$ and $z^{\mu}=z_1^{\mu_1}\cdots z_r^{\mu_r}$.
Then
\begin{align}\notag
\prod\limits_{1\leq i<j\leq r}(z_i-z_j)^2&=\sum\limits_{\sigma\in S_r}(-1)^{l(\sigma)}z^{\sigma(\delta)}\prod\limits_{1\leq i<j\leq r}(z_i-z_j)\\ \label{e:sq}
&=\sum\limits_{\sigma\in S_r}\sigma.\left(z^{\delta}\prod\limits_{1\leq i<j\leq r}(z_i-z_j)\right)
\end{align}
Note that if $(z_1\cdots z_r)^n$ appears in $\prod\limits_{1\leq i<j\leq r}(z_i-z_j)^2$, it must appear
inside $z^{\delta}\prod\limits_{1\leq i<j\leq r}(z_i-z_j)$, i.e. we can write
\begin{equation*}
z^{\delta}\prod\limits_{1\leq i<j\leq r}(z_i-z_j)=c_n(z_1\cdots z_r)^n+\sum_{\mu\neq (n, \cdots, n)}c_{\mu}z^{\mu}.
\end{equation*}
It follows from \eqref{e:sq} that
\begin{align*}
\prod\limits_{1\leq i<j\leq r}(z_i-z_j)^2&=\sum\limits_{\sigma\in S_r}\sigma.\left(c_n(z_1\cdots z_r)^n+\sum_{\mu\neq (n, \cdots, n)}c_{\mu}z^{\mu}\right)\\
&=r!c_n(z_1\cdots z_r)^n+\sum_{\mu\neq (n, \cdots, n), \sigma\in S_r}c_{\sigma^{-1}(\mu)}z^{\mu}.
\end{align*}
So the lemma is proved for $k=2$.

Now assume $k=1$. Suppose $(z_1\cdots z_r)^n$ appears inside $\prod \limits_{i<j}(z_i-z_j)G=\sum\limits_{\sigma\in S_r}(-1)^{l(\sigma)}z^{\delta}G$,
then the coefficient is determined by that of $(z_1\cdots z_r)^n$ in $z^{\delta}G$. We can write
\begin{equation}\label{e:sep}
z^{\delta}G=a_n(z_1\cdots z_r)^n+\sum_{\mu\neq (n, \cdots, n)}a_{\mu}z^{\mu}.
\end{equation}
where $a_{\mu}\neq 0$ for finitely many $\mu\in\mathbb Z^{r}$ with any fixed weight $|\mu|=\sum_i\mu_i$. Using \eqref{e:Van} and
\eqref{e:sep}, we have
\begin{align*}
\prod\limits_{1\leq i<j\leq r}(z_i-z_j)G&=\sum\limits_{\sigma\in S_r}(-1)^{l(\sigma)}\sigma.\left(a_n(z_1\cdots z_r)^n+\sum_{\mu\neq (n, \cdots, n)}a_{\mu}z^{\mu}\right)\\
&=\sum\limits_{\mu\neq (n, \cdots, n), \sigma\in S_r}(-1)^{l(\sigma)}a_{\sigma^{-1}(\mu)}z^{\mu},
\end{align*}
where the first summand vanishes due to anti-symmetry and the second summand contains no term $(z_1\cdots z_r)^n$, i.e.
the coefficient of $(z_1\cdots z_r)^n$ in $\prod\limits_{1\leq i<j\leq r}(z_i-z_j)G$ is zero. This completes the proof.
\end{proof}

\begin{thm}\label{CJ}
For $\alpha\in L$, let $Y(e^{\alpha},z)=\sum\limits_{n\in \mathbb{Z}}y_nz^{-n-1}$, then
$(V_L)_\mathbb{Z}$ (or $(V_{L+\gamma_i})_\mathbb{Z})$) is preserved by $y^{(r)}_n=\frac{y^r_n}{r!}$.
\end{thm}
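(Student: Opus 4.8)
The plan is to realize $y_n^{(r)}$ through an $r$-fold operator product and then reduce to Lemma~\ref{r!}. By definition, $y_n^r$ is the coefficient of $z_1^{-n-1}z_2^{-n-1}\cdots z_r^{-n-1}$ in the composition $Y(e^\alpha,z_1)Y(e^\alpha,z_2)\cdots Y(e^\alpha,z_r)$, read off in the usual iterated-Laurent-series sense (the region $|z_1|>|z_2|>\cdots>|z_r|$). First I would normal order this product by induction on $r$, using the standard relations
$$E^+(-\alpha,z_i)E^-(-\alpha,z_j)=(1-z_j/z_i)^{\langle\alpha,\alpha\rangle}E^-(-\alpha,z_j)E^+(-\alpha,z_i),\qquad z_i^{\alpha(0)}e^\alpha=z_i^{\langle\alpha,\alpha\rangle}e^\alpha z_i^{\alpha(0)},$$
together with $e^\alpha e^\alpha=\varepsilon(\alpha,\alpha)e^{2\alpha}$ and the fact that $e^\alpha$ commutes with all the $E^\pm$. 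For $i<j$ the two scalar factors combine as $z_i^{\langle\alpha,\alpha\rangle}(1-z_j/z_i)^{\langle\alpha,\alpha\rangle}=(z_i-z_j)^{\langle\alpha,\alpha\rangle}$, and one arrives at
$$Y(e^\alpha,z_1)\cdots Y(e^\alpha,z_r)=\varepsilon(\alpha,\alpha)^{\binom{r}{2}}\prod_{1\le i<j\le r}(z_i-z_j)^{\langle\alpha,\alpha\rangle}\,X(z_1,\ldots,z_r),$$
where $X(z_1,\ldots,z_r)=\Big(\prod_{i=1}^r E^-(-\alpha,z_i)\Big)\Big(\prod_{i=1}^r E^+(-\alpha,z_i)\Big)e^{r\alpha}(z_1\cdots z_r)^{\alpha(0)}$ is invariant under $S_r$: the $E^-$'s mutually commute, the $E^+$'s mutually commute, and both $e^{r\alpha}$ and $(z_1\cdots z_r)^{\alpha(0)}$ are symmetric.

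Next I would apply this identity to an arbitrary spanning vector $u=s_{\beta_1,-n_1}\cdots s_{\beta_k,-n_k}\otimes e^\eta$ of $(V_L)_{\mathbb Z}$. Since $(z_1\cdots z_r)^{\alpha(0)}u=(z_1\cdots z_r)^{\langle\alpha,\eta\rangle}u$, since $e^{r\alpha}u=\varepsilon(r\alpha,\eta)\,s_{\beta_1,-n_1}\cdots s_{\beta_k,-n_k}\otimes e^{r\alpha+\eta}$ with $\varepsilon(r\alpha,\eta)=\pm1$ and $r\alpha+\eta\in L$, and since both $E^-(-\alpha,z)$ and $E^+(-\alpha,z)$ preserve $(V_L)_{\mathbb Z}$ — the former because its coefficients are the $h_{\alpha,-m}$, which are integral combinations of Schur functions, the latter because the other three factors of $Y(e^\alpha,z)$ do and $E^-(-\alpha,z)=1+\cdots$ is invertible over $\mathbb Z$ — we get $X(z_1,\ldots,z_r)\,u=(z_1\cdots z_r)^{\langle\alpha,\eta\rangle}\,G(z_1,\ldots,z_r)$ for some $S_r$-invariant $G$ all of whose coefficients lie in $(V_L)_{\mathbb Z}$. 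Reading off the coefficient of $(z_1\cdots z_r)^{-n-1}$ then yields
$$y_n^r\,u=\varepsilon(\alpha,\alpha)^{\binom{r}{2}}\cdot\Big(\text{coefficient of }(z_1\cdots z_r)^{-n-1-\langle\alpha,\eta\rangle}\text{ in }\prod_{1\le i<j\le r}(z_i-z_j)^{\langle\alpha,\alpha\rangle}G\Big).$$

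Finally I would expand $G=\sum_s G_s\,e_s$ along a fixed $\mathbb Z$-basis $\{e_s\}$ of the (finitely many) homogeneous pieces of $(V_L)_{\mathbb Z}$ that actually occur; each $G_s\in\mathbb Z((z_1,\ldots,z_r))$ is again $S_r$-invariant, so Lemma~\ref{r!}, applied with $k=\langle\alpha,\alpha\rangle$ (a positive integer), shows that the coefficient of $(z_1\cdots z_r)^{-n-1-\langle\alpha,\eta\rangle}$ in $\prod_{i<j}(z_i-z_j)^{\langle\alpha,\alpha\rangle}G_s$ is divisible by $r!$. Hence $y_n^r\,u\in r!\,(V_L)_{\mathbb Z}$, so $y_n^{(r)}u=\tfrac1{r!}\,y_n^r\,u\in(V_L)_{\mathbb Z}$; since such $u$ span $(V_L)_{\mathbb Z}$, the first assertion follows. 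The same computation with $e^\eta\in\mathbb C\{L+\gamma_i\}$, so that $r\alpha+\eta$ still lies in $L+\gamma_i$, gives the statement for $(V_{L+\gamma_i})_{\mathbb Z}$. The main obstacle is the first step: performing the iterated normal ordering cleanly, correctly bookkeeping the cocycle signs, and verifying that the ``symmetric part'' $X(z_1,\ldots,z_r)$ is genuinely $S_r$-invariant as an $\mathrm{End}(V_L)$-valued series. Once the product identity is available, Lemma~\ref{r!} supplies everything else.
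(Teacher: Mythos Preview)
Your argument is correct and follows essentially the same route as the paper: both isolate the factor $\prod_{i<j}(z_i-z_j)^{\langle\alpha,\alpha\rangle}$ from the $r$-fold product and then invoke Lemma~\ref{r!} on the remaining $S_r$-symmetric piece. The only cosmetic difference is that the paper applies $Y(e^\alpha,z_1)\cdots Y(e^\alpha,z_r)$ directly to the generating series $E^-(-\beta_1,w_1)\cdots E^-(-\beta_l,w_l)\otimes e^\eta$ and reads off integrality from the explicit formula \eqref{ynr}, whereas you first normal-order the operator product and then act on a fixed vector; both computations produce the same Vandermonde factor times a symmetric integral series.
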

\begin{proof} Recall that for $\alpha,\beta \in L$ \cite{FLM}
\begin{equation*}
  E^+(-\alpha,z)E^-(-\beta,w)=E^-(-\beta,w)E^+(-\alpha,z)(1-\frac{w}{z})^{\langle \alpha, \beta \rangle}
\end{equation*}
So we have,
\begin{align*}
Y(e^{\alpha},z_r)& \Big(E^-(-\beta_1,w_1)\cdots E^-(-\beta_l,w_l)\otimes e^\eta \Big)\\
    =&\epsilon (\alpha, \eta)z_r^{\langle \alpha, \eta \rangle}
      \prod\limits_{s=1}\limits^l(1-\frac{w_s}{z_r})^{\langle \alpha, \beta_s \rangle}
      E^-(-\alpha,z_r)E^-(-\beta_1,w_1)\cdots E^-(-\beta_l,w_l)\otimes e^{\alpha+\eta}.
\end{align*}
Then
\begin{align}\label{ynr}
  Y(e^{\alpha},z_1)&Y(e^{\alpha},z_2)\cdots Y(e^{\alpha},z_r)
    E^-(-\beta_1,w_1)\cdots E^-(-\beta_l,w_l)\otimes e^\eta \nonumber\\   \nonumber
    &=\epsilon \cdot (z_1z_2\cdots z_r)^{\langle \alpha, \eta \rangle}
       \prod\limits_{1\leq k< s\leq r}(z_k-z_s)^{\langle \alpha, \alpha \rangle}
       \prod\limits_{1\leq k\leq r,1\leq s\leq l}(1-\frac{w_s}{z_k})^{\langle \alpha, \beta_s \rangle}\\
       &\quad \cdot  E^-(-\alpha,z_1) \cdots E^-(-\alpha,z_r)E^-(-\beta_1,w_1)\cdots E^-(-\beta_l,w_l)\otimes e^{r\alpha+\eta},
\end{align}
where $\epsilon=\epsilon(\alpha,\eta)\epsilon(\alpha,\alpha+\eta)\cdots \epsilon(\alpha,(r-1)\alpha+\eta)$.
Subsequently using Lemma \ref{r!}, for $\alpha \in L, \langle \alpha, \alpha \rangle \in \mathbb{Z}_+$, we have that
\begin{equation*}
  y_n^{(r)}\cdot \bigg(E^-(-\beta_1,w_1)\cdots E^-(-\beta_l,w_l)\bigg)
             w_1^{\mu_1}w_2^{\mu_2}\cdots w_l^{\mu_l}\otimes e^{\eta}\in (V_L)_\mathbb{Z}
\end{equation*}
where $\mu_1,\mu_2,\cdots,\mu_l\in \mathbb{Z}, \eta \in \Delta$, which means that
$y_n^{(r)}(V_L)_\mathbb{Z} \subset (V_L)_\mathbb{Z}$.
\end{proof} 

\begin{corollary}\rm
For $\alpha_i \in L, k_i \in \mathbb{Z} $, let $v=\sum\limits_{i=1}\limits^m k_ie^{\alpha_i}$ such that
$\langle\alpha_i, \alpha_j\rangle\geq 0$, and $Y(v,z)=\sum\limits_{n\in \mathbb{Z}}v_nz^{-n-1}$, then
$(V_L)_\mathbb{Z}$ is preserved by $v^{(r)}_n=\frac{v^r_n}{r!}$.
\end{corollary}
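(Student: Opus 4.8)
The plan is to reduce the statement about $v=\sum_{i=1}^m k_i e^{\alpha_i}$ to the case of a single $e^{\alpha}$ already handled in Theorem~\ref{CJ}, exploiting the hypothesis $\langle\alpha_i,\alpha_j\rangle\geq 0$. The key point is that when we expand $v^r_n$, the binomial/multinomial expansion produces terms $(e^{\alpha_{i_1}})_{n_1}(e^{\alpha_{i_2}})_{n_2}\cdots(e^{\alpha_{i_r}})_{n_r}$, but unlike in the single-vertex case these do not directly assemble into a symmetric generating function because the different $\alpha_i$'s must be correlated. So first I would set up the generating-function computation exactly as in \eqref{ynr}: acting with $Y(v,z_1)Y(v,z_2)\cdots Y(v,z_r)$ on $E^-(-\beta_1,w_1)\cdots E^-(-\beta_l,w_l)\otimes e^\eta$ yields a sum, over all sequences $(i_1,\ldots,i_r)\in\{1,\ldots,m\}^r$ weighted by $k_{i_1}\cdots k_{i_r}$, of expressions of the form (up to a cocycle scalar $\epsilon$)
\begin{equation*}
(z_1\cdots z_r)^{?}\prod_{1\leq k<s\leq r}(z_k-z_s)^{\langle\alpha_{i_k},\alpha_{i_s}\rangle}\prod_{k,s}(1-w_s/z_k)^{\langle\alpha_{i_k},\beta_s\rangle}\,E^-(-\alpha_{i_1},z_1)\cdots E^-(-\alpha_{i_r},z_r)E^-(-\beta_1,w_1)\cdots\otimes e^{\alpha_{i_1}+\cdots+\alpha_{i_r}+\eta}.
\end{equation*}

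The natural move is to group the sum by the \emph{type} of the multi-index: for a fixed function recording how many of the $\alpha$'s equal each basis vector, the sub-sum over the orderings is symmetric in the corresponding blocks of $z$-variables, and one applies Lemma~\ref{r!} to each block separately. Concretely, if the multiset $\{i_1,\ldots,i_r\}$ has $r_1$ copies of index $1$, $r_2$ of index $2$, etc., with $r_1+\cdots+r_m=r$, then after symmetrizing one gains a factor $r_1!\,r_2!\cdots r_m!$; combined with the multinomial coefficient $\binom{r}{r_1,\ldots,r_m}=\frac{r!}{r_1!\cdots r_m!}$ appearing when we collect the $k_{i_1}\cdots k_{i_r}$ terms, the total divisibility by $r!$ is exactly what is needed to conclude $v_n^{(r)}(V_L)_{\mathbb Z}\subset(V_L)_{\mathbb Z}$. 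The role of the hypothesis $\langle\alpha_i,\alpha_j\rangle\geq 0$ (together with $\langle\alpha_i,\alpha_i\rangle\geq 0$, automatic in an even lattice by positivity or at worst needing the even condition) is to guarantee that the factors $\prod(z_k-z_s)^{\langle\alpha_{i_k},\alpha_{i_s}\rangle}$ are genuine polynomials, so no negative powers of $(z_k-z_s)$ spoil the coefficient extraction, and the $E^-$ operators and $(1-w_s/z_k)^{\langle\alpha_{i_k},\beta_s\rangle}$ factors only contribute integral coefficients when expanded. One also checks, as in Theorem~\ref{CJ}, that the cocycle scalars $\epsilon(\alpha_{i_j},\cdot)=\pm1$ are integral.

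The step I expect to be the main obstacle is the bookkeeping in the symmetrization argument: Lemma~\ref{r!} as stated symmetrizes over the \emph{full} $S_r$, but here we only have symmetry within each block of equal indices, i.e. invariance under the Young subgroup $S_{r_1}\times\cdots\times S_{r_m}$, and the Vandermonde-type product is only the partial product over pairs \emph{within} a block. So I would need either a block-wise version of Lemma~\ref{r!} — proved by the same argument applied iteratively to each factor $\prod_{i<j\text{ in block }t}(z_i-z_j)^{\langle\alpha_t,\alpha_t\rangle}$ against a function symmetric in that block's variables — or I would first extract the coefficient in the cross-block and $w$-variables, reducing to the pure single-block situation where Theorem~\ref{CJ} applies verbatim. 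The cleanest route is probably the latter: fix all $z$-exponents outside block $t$ and all $w$-exponents, observe that what remains is, up to an overall integral coefficient, exactly the single-$e^{\alpha_t}$ expression from \eqref{ynr} with $r$ replaced by $r_t$, and then invoke the proof of Theorem~\ref{CJ} together with the multinomial identity to absorb the factor $r!/(r_1!\cdots r_m!)$.
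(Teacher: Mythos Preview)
Your approach is workable but considerably more elaborate than the paper's. The paper dispatches the corollary in two lines: first it observes that the hypothesis $\langle\alpha_i,\alpha_j\rangle\geq 0$ forces the fields $Y(e^{\alpha_i},z)$ to \emph{commute} with one another (the usual vertex-operator calculus gives $Y(e^{\alpha_1},z_1)Y(e^{\alpha_2},z_2)=Y(e^{\alpha_2},z_2)Y(e^{\alpha_1},z_1)$ whenever the exponent $\langle\alpha_1,\alpha_2\rangle$ is nonnegative, since then $(z_1-z_2)^{\langle\alpha_1,\alpha_2\rangle}$ is a genuine polynomial and the cocycle sign cancels the sign from swapping the two factors). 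Hence all the modes $(e^{\alpha_i})_n$ commute. Second, for pairwise commuting operators $A_1,\ldots,A_m$ whose divided powers each preserve a lattice, the divided powers of $\sum_i A_i$ also do, by the identity $(A_1+\cdots+A_m)^{(r)}=\sum_{r_1+\cdots+r_m=r}\prod_i A_i^{(r_i)}$. Applying this with $A_i=k_i(e^{\alpha_i})_n$ and Theorem~\ref{CJ} finishes the proof.

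Your generating-function argument ultimately reproduces this, but the step where you invoke ``the multinomial coefficient $\binom{r}{r_1,\ldots,r_m}$ appearing when we collect the $k_{i_1}\cdots k_{i_r}$ terms'' is precisely where commutativity is hiding, and you have not justified it. The $\binom{r}{r_1,\ldots,r_m}$ different orderings of a fixed type contribute \emph{equally} to the coefficient of $(z_1\cdots z_r)^{-n-1}$ only because $G_{I'}(z)=\sigma^{-1}\cdot G_I(z)$ whenever $I'=I\circ\sigma$, and that identity is exactly the commutativity of the $Y(e^{\alpha_i},z)$'s: the cocycle sign must cancel the sign picked up when the $(z_k-z_s)^{\langle\alpha_{i_k},\alpha_{i_s}\rangle}$ factors are permuted. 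Without this, your block-wise application of Lemma~\ref{r!} shows each individual ordering contributes a coefficient divisible by $r_1!\cdots r_m!$, but the \emph{sum} over orderings is then only guaranteed to be divisible by $r_1!\cdots r_m!$, not by $r!$. Once you make the commutativity explicit to close this gap, you might as well use it directly at the operator level as the paper does, bypassing the block decomposition and the iterated Lemma~\ref{r!} entirely.
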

\begin{proof} By vertex operator calculus it is well-known that if $\langle\alpha_i, \alpha_j\rangle\geq 0$, then
\begin{equation*}
  Y(e^{\alpha_1},z_1)Y(e^{\alpha_2},z_2)=Y(e^{\alpha_2},z_2)Y(e^{\alpha_1},z_1).
\end{equation*}
Also when the space $V$ is preserved under the action of the divided powers of commuting operators $A_1,\cdots, A_n$,
so is $A=\sum_i A_i$ under the action of $\mathbb Z$-linear combinations of product of divided powers of $A_i$.
In fact, this follows easily from $(A+B)^{(n)}=\sum_{i=1}^nA^{(i)}B^{(n-i)}$.
\end{proof} 

Before discussing the general vertex operator, let's consider the case of the Heisenberg algebra.
Recall that we have denoted that
$E^-(-\alpha,z)=\exp\bigg( \sum\limits_{n\in {\mathbb{Z}_+}}\frac{\alpha(-n)}{n}z^n\bigg)=
                  \sum\limits_{n\geq0}h_{\alpha, -n}z^n$. The right analog for the divided power in this case is
                  the following operator.
For any $k\in\mathbb N$, we introduce the Garland operators $h_{\alpha, -n}^{[k]}$ by
\begin{equation}
\exp\bigg( \sum\limits_{n\in {\mathbb{Z}_+}}\frac{\alpha(-kn)}{n}z^n\bigg)=\sum\limits_{n\geq0}h_{\alpha, -n}^{[k]}z^n.
\end{equation}
Special case of the operator $h_{\alpha, -n}^{[k]}$ was considered by Garland for affine Lie algebras \cite{G}.

\begin{thm}
For any $k, n\in \mathbb N$, the elements
$h_{\alpha, -n}^{[k]}\in \mathbb{Z}[h_{\alpha, -1},h_{\alpha, -2}, \cdots, h_{\alpha, -kn}]$.
In particular, as an operator $h_{\alpha, -n}^{[k]}$ preserves $(V_L)_\mathbb{Z}$.
\end{thm}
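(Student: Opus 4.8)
The plan is to recast the statement inside the ring of symmetric functions and then specialize. Write $\Lambda_{\mathbb Q}=\mathbb Q[p_1,p_2,\dots]$ with $\exp\big(\sum_{m\geq1}\frac{p_m}{m}z^m\big)=\sum_{n\geq0}h_nz^n$, so that each $h_n$ is a universal polynomial in $p_1,\dots,p_n$; by construction $h_{\alpha,-n}$ is the value of $h_n$ under the $\mathbb Q$-algebra homomorphism $\Lambda_{\mathbb Q}\to S(\hat{\mathfrak h}^-_L)_{\mathbb Q}$, $p_m\mapsto\alpha(-m)$. Let $\phi_k\colon\Lambda_{\mathbb Q}\to\Lambda_{\mathbb Q}$ be the $\mathbb Q$-algebra endomorphism with $\phi_k(p_m)=p_{km}$; applying $\phi_k$ coefficientwise to the generating function gives $\sum_n\phi_k(h_n)z^n=\exp\big(\sum_m\frac{p_{km}}{m}z^m\big)$, and hence the definition of the Garland operators says precisely that $h^{[k]}_{\alpha,-n}$ is the value of $\phi_k(h_n)$ under the same specialization $p_m\mapsto\alpha(-m)$. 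Since this specialization sends $h_\ell\mapsto h_{\alpha,-\ell}$, the theorem follows once we prove the purely symmetric-function assertion $\phi_k(h_n)\in\mathbb Z[h_1,\dots,h_{kn}]\subseteq\Lambda_{\mathbb Z}$.

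For this I would use that $\phi_k$ is the $k$-th power (Adams) operation: on an alphabet $x=(x_1,x_2,\dots)$ one has $p_m(x_1^k,x_2^k,\dots)=p_{km}(x)$, and both $\phi_k$ and $f(x)\mapsto f(x_1^k,x_2^k,\dots)$ are ring homomorphisms, so they agree. Hence $\phi_k(h_n)=h_n(x_1^k,x_2^k,\dots)=\sum_{\lambda\vdash n}m_\lambda(x_1^k,x_2^k,\dots)$ has nonnegative integer coefficients in its monomial expansion, so it lies in $\Lambda_{\mathbb Z}$; being homogeneous of degree $kn$, and $\Lambda^{kn}_{\mathbb Z}=\bigoplus_{\mu\vdash kn}\mathbb Z\,h_\mu$ with every part of $\mu$ at most $kn$, we conclude $\phi_k(h_n)\in\mathbb Z[h_1,\dots,h_{kn}]$. (One also gets an explicit formula: with $\zeta$ a primitive $k$-th root of unity, $\sum_n\phi_k(h_n)z^{kn}=\prod_{j=0}^{k-1}\big(\sum_{m\geq0}h_m\zeta^{jm}z^m\big)$, so $h^{[k]}_{\alpha,-n}=\sum_{m_0+\cdots+m_{k-1}=kn}\zeta^{m_1+2m_2+\cdots+(k-1)m_{k-1}}h_{\alpha,-m_0}\cdots h_{\alpha,-m_{k-1}}$; the left-hand side is independent of the choice of $\zeta$, forcing the right-hand side to have integer coefficients.)

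For the final assertion I would note that $h_{\alpha,-m}$ itself lies in the integral form $S_{\mathbb Z}(\hat{\mathfrak h}^-_L)$: when $\alpha\in\Delta$ this is part of the description of $(V_L)_{\mathbb Z}$, and for general $\alpha=\sum_i c_i\ep_i$ ($c_i\in\mathbb Z$) one has $\sum_m h_{\alpha,-m}z^m=\prod_i\big(\sum_n h_{\ep_i,-n}z^n\big)^{c_i}$, an integral power series in $z$ because each $\sum_n h_{\ep_i,-n}z^n$ is an integral power series with constant term $1$, and so are its positive and negative integer powers. Since $S_{\mathbb Z}(\hat{\mathfrak h}^-_L)$ is a subring of $S(\hat{\mathfrak h}^-_L)$ and $(V_L)_{\mathbb Z}=S_{\mathbb Z}(\hat{\mathfrak h}^-_L)\otimes\mathbb Z\{L\}$ (and likewise $(V_{L^\circ})_{\mathbb Z}$ and each $(V_{L+\gamma_i})_{\mathbb Z}$), multiplication by any element of $S_{\mathbb Z}(\hat{\mathfrak h}^-_L)$---in particular by $h^{[k]}_{\alpha,-n}=P(h_{\alpha,-1},\dots,h_{\alpha,-kn})$ with $P\in\mathbb Z[x_1,\dots,x_{kn}]$---preserves these integral forms.

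The one step I would be careful about is the reduction in the first paragraph, namely that forming the universal polynomials $h_n$, applying $\phi_k$, and then specializing $p_m\mapsto\alpha(-m)$ really does reproduce $h^{[k]}_{\alpha,-n}$; this is just the observation that the generating-function definition of $h^{[k]}_{\alpha,-n}$ is obtained from that of $h_{\alpha,-n}$ by the substitution $\alpha(-m)\mapsto\alpha(-km)$, but it is the only place where one has to be precise. Everything afterwards is a standard fact about $\Lambda_{\mathbb Z}=\mathbb Z[h_1,h_2,\dots]$ together with the module structure of the integral form already established.
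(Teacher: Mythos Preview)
Your proof is correct and takes a genuinely different route from the paper. You identify $h^{[k]}_{\alpha,-n}$ as the specialization of the plethysm $\phi_k(h_n)=h_n\circ p_k$ and then invoke the standard fact that the Adams operation $\phi_k$ preserves $\Lambda_{\mathbb Z}$, via the realization $\phi_k(f)(x)=f(x_1^k,x_2^k,\dots)$; this gives integrality with no computation. The paper instead uses precisely the roots-of-unity identity you mention parenthetically: from $\prod_{j=0}^{k-1}E^-(-\alpha,\omega^j z)=\sum_n h^{[k]}_{\alpha,-n}z^{kn}$ one reads off $h^{[k]}_{\alpha,-n}\in\mathbb Z[\omega][h_{\alpha,-1},\dots,h_{\alpha,-kn}]$, then combines this with the obvious membership in $\mathbb Q[h_{\alpha,-1},\dots,h_{\alpha,-kn}]$ to conclude via $\mathbb Q\cap\mathbb Z[\omega]=\mathbb Z$. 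Your argument is cleaner and conceptually explains \emph{why} integrality holds (plethysm by any $p_k$ preserves $\Lambda_{\mathbb Z}$), and you also supply the extra verification that $h_{\alpha,-m}\in S_{\mathbb Z}(\hat{\mathfrak h}^-_L)$ for arbitrary $\alpha\in L$, not just $\alpha\in\Delta$, a point the paper does not address explicitly; the paper's approach, on the other hand, produces the explicit expansion in the $h_{\alpha,-m}$ directly.
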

\begin{proof}
Let $\omega $ be the kth roots of unity: $\omega^k=1$, then
$$E^-(-\alpha, z)E^-(-\alpha, z\omega)\cdots E^-(-\alpha, z\omega^{k-1})=\exp\bigg( \sum\limits_{n\in {\mathbb{Z}_+}}\frac{\alpha(-kn)}{n}z^{kn}\bigg)
=\sum\limits_{n\geq0}h_{\alpha, -n}^{[k]}z^{kn}, $$
since $1+\omega^j+\omega^{2j}+\cdots+\omega^{(k-1)j}=0$, where $k\nmid j$.
Taking the coefficients of $z^{kn}$ ($n>0$), we have
 $$h_{\alpha, -n}^{[k]}=\sum\limits_{i_1+i_2+\cdots+i_k=kn}h_{\alpha, i_1}h_{\alpha, i_2}\cdots h_{\alpha, i_k}\omega^{i_2+2i_3+\cdots(k-1)i_k}
 \in \mathbb{Z}[\omega][h_{\alpha, -1},h_{\alpha, -2}, \cdots, h_{\alpha, -kn}].$$
It is obvious that $h_{-n}^{[k]} \in \mathbb{Q}[\alpha(-1),\cdots \alpha(-kn)] = \mathbb{Q}[h_{\alpha, -1},h_{\alpha, -2}, \cdots, h_{\alpha, -kn}]$.
Therefore
$$h_{\alpha, -n}^{[k]} \in (\mathbb{Q}\cap \mathbb{Z}[\omega])[h_{\alpha, -1},h_{-2}, \cdots, h_{\alpha, -kn}]=\mathbb{Z}[h_{\alpha, -1},h_{\alpha, -2}, \cdots, h_{\alpha, -kn}].$$
\end{proof} 

\section{$\mathbb Z$-lattice structure of the lattice vertex algebra}  

Now we consider the action of general homogeneous vertex operator on $(V_L)_{\mathbb{Z}}$.
Let $v=\alpha_1(-\la_1)$ $\cdots$ $\alpha_k(-\la_k)e^{\ga} \in V_L$,
$\alpha_i, \ga\in L^{\times}$, where $\la_i\geq 1, k\geq 0$, here $k=0$ means $v=e^{\ga}$.

For $\beta\in L$, we write $\beta(z)=\beta^+(z)+\beta^-(z)$, where $\beta^{\pm}(z)$ refers to the annihilation/creating part. Then for
$m\geq 1$
\begin{align*}
\partial^{(m-1)}\beta(z)
&=A^-_{\beta, m}(z)+A^+_{\beta, m}(z)
\end{align*}
where $A^+_{\beta, m}(z)=\sum\limits_{n\geq 0}(-1)^{m-1}\binom{n+m-1}{m-1}\beta(n)z^{-n-m}$
and
$A^-_{\beta, m}(z)=\sum\limits_{n>0}\binom{n-1}{m-1}\beta(-n)z^{n-m}$
are the annihilation and creation parts of the operator, respectively. 
We omit the subscript $\beta$ if no confusion arises.

For $\al(\la)=\alpha_1(-\la_1)\cdots \alpha_k(-\la_k)$, 
let
\begin{align*}
A^+_{\al(\la)}(z):&=A^+_{\al_1, \lambda_1}(z)\cdots A^+_{\al_k, \lambda_k}(z),\\
A^-_{\al(\la)}(z):&=A^-_{\al_1, \lambda_1}(z)\cdots A^-_{\al_k, \lambda_k}(z).
\end{align*}
where the index of $\la_i$ matches with that of $\al_i$. We can view $\la=(\la_1, \ldots, \la_k)$ as
a parition, and also view $(\la_1, \ldots, \la_k)$ as a decomposition of $\sum_i\al_i$, thus the notation
$|\al|=\sum_i\al_i$ will also be adopted. When $\al_i$ are fixed and omitted, we will use $\la_i$ to specify
the dependence of $\al_i$.
Then for $v=\al_1(-n_1)\cdots \al_k(-n_k)e^{\ga}$ we can write
\begin{align}\label{Yvz}
  Y(v,z)&=\sum \limits_{n\in \mathbb{Z}}v_nz^{-n-1}  \nonumber \\
          &=\, : \partial^{(n_1-1)}\alpha_1(z)
                        \cdots
                        \partial^{(n_k-1)}\alpha_k(z)\cdot
                         Y(e^{\ga},z) : \nonumber \\
          &=\sum \limits_\lambda A^-_{\al(\lambda)}(z)Y(e^{\ga},z)A^+_{\al(^c{\lambda})}(z),
\end{align}
summed over $2^k$ subpartitions $\lambda=(\lambda_1,\cdots, \lambda_s)$ of $(n_1, \cdots ,n_k)$, and $^c{\lambda}$ is the complementary subpartition of $\la$ inside $(n_1, \cdots ,n_k)$. i.e. if $\la=(n_{i_1}, \cdots, n_{i_s})$, then
$\al(\la)=\al_{i_1}(-n_{i_1})\cdots \al_{i_1}(-n_{i_1})$.

Fix $\al_1, \ldots, \al_k\in L$. For a sequence $\underline{\beta}=(\beta_1,\beta_2,\cdots,\beta_l)$, $\beta_i\in L$
and $\al(-m)$, $m\in\mathbb Z_+$, we define
\begin{equation}\label{fz1}
f_{\al, m}(z; w):=f_{\al, m}(\underline{\beta}, z; w)
\sum_{i=1}^l\frac{\langle\al, \beta_i\rangle(-1)^{m-1}}{(z-w_i)^{m}}
\end{equation}
where $w=(w_1, \cdots, w_l)$ and the rational function refers to the power series in $w_i$.
Also for fixed $\underline{\beta}$ and $\al(\la)=\al_1(-\la_1)\cdots \al_k(-\la_k)$ we define the formal series:
\begin{equation}\label{fzz}
f_{\al(\lambda)}(z,w):=f_{\al(\lambda)}(\underline{\beta}, z; w)=f_{\al_1, \lambda_1}(\underline{\beta}, z; w)\cdots f_{\al_k, \lambda_k}(\underline{\beta}, z; w)
\end{equation}

\begin{lemma} \label{A1E}
Let $\al(\lambda)=\al_1(-\la_1)\al_2(-\lambda_2)\cdots \al_k(-\la_k)$. Then for any sequence $(\beta_1,\beta_2,\cdots,\beta_l), \beta_q\in L, 1\leq q\leq l$, and $|\beta|=\sum_{i=1}^{l}\beta_i$, one has that
\begin{align}\notag
 A^+_{\al(\lambda)}(z)\prod_{i=1}^lE^-(-\beta_i,w_i)e^{|\beta|}
 &=\prod_{j=1}^k\left(\sum_{i=1}^l\frac{\langle\al_j, \beta_i\rangle(-1)^{\la_j-1}}{(z-w_i)^{\la_j}}\right)\prod_{i=1}^lE^-(-\beta_i,w_i)e^{|\beta|}\\ \label{e:AlE}
 &=f_{\al(\la)}(\beta,z; w)\prod_{i=1}^lE^-(-\beta_i,w_i)e^{|\beta|}.
 \end{align}
\end{lemma}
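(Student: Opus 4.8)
The plan is to evaluate the left side by commuting the Heisenberg annihilation modes making up $A^+_{\al(\la)}(z)$ to the right, through the product $\prod_i E^-(-\beta_i,w_i)$ and onto $e^{|\beta|}$, one mode at a time, and then to resum. The basic input is that for $\ga\in L$ and $n\in\mathbb N$ the bracket $[\ga(n),\beta_i(-p)]=n\langle\ga,\beta_i\rangle\delta_{n,p}$ is central, so
\[
[\ga(n),E^-(-\beta_i,w_i)]=\langle\ga,\beta_i\rangle\, w_i^{\,n}\, E^-(-\beta_i,w_i)\qquad(n\ge 1),
\]
while $\ga(0)$ commutes with every $E^-(-\beta_i,w_i)$. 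Since these commutators are scalars they add across the product, and applying the result to $e^{|\beta|}$ — on which the modes $\ga(n)$, $n\ne0$, act by zero while $\ga(0)$ acts by $\langle\ga,|\beta|\rangle=\sum_i\langle\ga,\beta_i\rangle$ — one finds, uniformly for all $n\in\mathbb Z_{\ge0}$,
\[
\ga(n)\prod_{i=1}^l E^-(-\beta_i,w_i)\,e^{|\beta|}=\Big(\sum_{i=1}^l\langle\ga,\beta_i\rangle w_i^{\,n}\Big)\prod_{i=1}^l E^-(-\beta_i,w_i)\,e^{|\beta|}.
\]
It is precisely the choice of charge $|\beta|=\sum_i\beta_i$ that makes the $n=0$ term ($\ga(0)$ acting by $\sum_i\langle\ga,\beta_i\rangle$) coincide with the $n=0$ value of $\sum_i\langle\ga,\beta_i\rangle w_i^{\,n}$, so the zero mode needs no separate treatment.

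Next I would substitute this into $A^+_{\ga,m}(z)=\sum_{n\ge0}(-1)^{m-1}\binom{n+m-1}{m-1}\ga(n)z^{-n-m}$ and sum the resulting series term by term. Using the binomial expansion $\sum_{n\ge0}\binom{n+m-1}{m-1}x^n=(1-x)^{-m}$ with $x=w_i/z$ — this being exactly the convention under which $(z-w_i)^{-m}$ is a well-defined power series in $w_i$ — the sum over $n$ collapses and gives
\[
A^+_{\ga,m}(z)\prod_{i=1}^l E^-(-\beta_i,w_i)\,e^{|\beta|}=\Big(\sum_{i=1}^l\frac{(-1)^{m-1}\langle\ga,\beta_i\rangle}{(z-w_i)^{m}}\Big)\prod_{i=1}^l E^-(-\beta_i,w_i)\,e^{|\beta|},
\]
that is, the single operator $A^+_{\ga,m}(z)$ acts on $\prod_i E^-(-\beta_i,w_i)\,e^{|\beta|}$ as multiplication by the scalar series $f_{\ga,m}(\underline\beta,z;w)$ of \eqref{fz1}.

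Finally I would iterate over the $k$ factors of $A^+_{\al(\la)}(z)=A^+_{\al_1,\la_1}(z)\cdots A^+_{\al_k,\la_k}(z)$. Each factor involves only the mutually commuting modes $\al_j(n)$, $n\ge0$, so the composite is unambiguous; applied from the right, $A^+_{\al_k,\la_k}(z)$ produces $f_{\al_k,\la_k}(\underline\beta,z;w)$ times $\prod_i E^-(-\beta_i,w_i)\,e^{|\beta|}$, and since $f_{\al_k,\la_k}$ is a scalar series in $z$ and the $w_i$ (carrying no Heisenberg modes) it commutes past the remaining factors; repeating $k$ times yields $\prod_{j=1}^k f_{\al_j,\la_j}(\underline\beta,z;w)\prod_i E^-(-\beta_i,w_i)\,e^{|\beta|}$, which equals $f_{\al(\la)}(\underline\beta,z;w)\prod_i E^-(-\beta_i,w_i)\,e^{|\beta|}$ by \eqref{fzz}, establishing \eqref{e:AlE}. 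There is no serious obstacle here — it is a direct normal-ordering computation — so the only real care lies in the three bookkeeping points above: the uniform handling of the zero mode, which is what forces the charge to be $|\beta|$; matching the sign $(-1)^{m-1}$ and the pole order $m$ after resumming the binomial series in the stated expansion convention; and observing that the intermediate scalar functions commute with the surviving $A^+$ factors, so the product genuinely factorizes.
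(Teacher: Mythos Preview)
Your argument is correct and follows essentially the same route as the paper: commute the annihilation modes through the product of $E^-$ operators, use that the positive modes annihilate $e^{|\beta|}$ while the zero mode acts by $\langle\ga,|\beta|\rangle$, resum the binomial series to the closed form $(-1)^{m-1}(z-w_i)^{-m}$, and then iterate over the $k$ factors. The only organizational difference is that the paper carries out the single-factor step for a general charge $e^{\eta}$ (obtaining an extra $\langle\al_j,\eta-|\beta|\rangle z^{-\la_j}$ term) and then specializes to $\eta=|\beta|$, whereas you work with $\eta=|\beta|$ from the outset and thereby avoid that detour; this is a cosmetic streamlining, not a different method.
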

\begin{proof} Note that 
$[\alpha(n),E^-(-\beta,w)e^{\beta}]=\langle \alpha, \beta \rangle w^nE^-(-\beta,w)$ ($n\geq 0$) and
$[\alpha(0), e^{\eta}]=\langle\alpha, \eta\rangle e^{\eta}$.
It follows that
\begin{flalign}\label{AE}
[A^+_{\al, m}(z),E^-(-\beta,w)e^{\beta}]&=\langle \alpha, \beta \rangle  \bigg(\sum\limits_{n\geq 0}\binom{n+m-1}{m-1}(-1)^{m-1}z^{-n-m} w^n \bigg) E^-(-\beta,w)e^{\beta} \nonumber\\
&=(-1)^{m-1}\langle \alpha, \beta\rangle(z-w)^{-m}E^-(-\beta,w)e^{\beta}
\end{flalign}
and $A^+_{\al, m}(z)e^{\eta}=\langle\al, \eta\rangle(-1)^{m-1}z^{-m}e^{\eta}$.
Using these relations we get that
\begin{align*}
&A^+_{\al, m}(z)E^-(-\beta_1,w_1)\cdots E^-(-\beta_l,w_l).e^{\eta}=\epsilon A^+_{\al, m}(z)E^-(-\beta_1,w_1)\cdots E^-(-\beta_l,w_l)e^{\sum_i\beta_i}.e^{\eta-\sum_i\beta}\\
&=\epsilon \prod_{i=1}^l(E^-(-\beta_i,w_i)e^{\beta_i})
A^+_{\al, m}(z).e^{\eta-\sum_i\beta_i}+\epsilon\left(\sum_{i=1}^l\frac{\langle\al, \beta_i\rangle(-1)^{m-1}}{(z-w_i)^{m}}\right)\prod_{i=1}^lE^-(-\beta_i,w_i)e^{\beta_i}.e^{\eta-\sum_i\beta_i}\\
&=\left((-1)^{m-1}z^{-m}\langle\al, \eta-\sum_{i=1}^l\beta_i\rangle+\sum_{i=1}^l\frac{\langle\al, \beta_i\rangle(-1)^{m-1}}{(z-w_i)^{m}}\right)
\prod_{i=1}^lE^-(-\beta_i,w_i)e^{\eta},
\end{align*}
where $\epsilon=\varepsilon(|\beta|, \eta-|\beta|)^{-1}$.
Successively applying $A^+_{\al, \la_j}(z_j)$, we obtain that
\begin{align*}
 A^+_{\al(\lambda)}(z)&\prod_{i=1}^lE^-(-\beta_i,w_i)e^\eta=\bigg(A^+_{\al_1, \lambda_1}(z)\cdots A^+_{\al_k,\lambda_k}(z)\bigg)\prod_{i=1}^lE^-(-\beta_i,w_i)e^\eta\\
 &=\prod_{j=1}^k\left((-1)^{\la_j-1}z^{-\la_j}\langle\al_j, \eta-|\beta|\rangle+\sum_{i=1}^l\frac{\langle\al_j, \beta_i\rangle(-1)^{\la_j-1}}{(z-w_i)^{\la_j}}\right)\prod_{i=1}^lE^-(-\beta_i,w_i)e^\eta.
\end{align*}
In particular, \eqref{e:AlE} follows by taking $\eta=|\beta|=\sum_i\beta_i$.
\end{proof}

This lemma allows us to move the annihilation operator $A^+_{\al(\lambda)}(z)$ to the right. Next we consider how to move it
across the creating operator $A^-_{\al(\xi)}(z)$ and $Y(\iota(e_\alpha),z_j)$.

\begin{lemma} \label{A1A0}
Let $\lambda=(\lambda_1,\ldots, \lambda_k), \mu=(\mu_1,\ldots, \mu_l)$ be two compositions of nonnegative integers, and
$\al_1, \ldots, \al_k, \beta_1, \ldots, \beta_l\in L$, then
\begin{equation}\label{a+a-}
  A^+_{\al(\lambda)}(z)A^-_{\al(\mu)}(w)= \sum\limits_{\mu^\ast, \lambda^\ast}C_{\mu^\ast, \lambda^\ast}(z,w)
                               A^-_{\al(\mu^\ast)}(z)A^+_{\al(\lambda^\ast)}(w),
\end{equation}
where $\mu^\ast, \lambda^\ast$ run through all possible pairs of equal length subcompositions of $\mu$ and permutations of subcompositions of $\lambda$ respectively and
$C_{\xi^\ast, \lambda^\ast}(z,w) \in \mathbb Z[[z,z^{-1},w,w^{-1}]]$.
\end{lemma}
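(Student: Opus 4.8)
The plan is to establish the commutation relation \eqref{a+a-} by induction on the total length $k$ of the composition $\lambda$, reducing everything to the case $k=1$, i.e.\ to moving a single operator $A^+_{\alpha, m}(z)$ past the product $A^-_{\alpha(\mu)}(w) = A^-_{\beta_1, \mu_1}(w)\cdots A^-_{\beta_l, \mu_l}(w)$. For this base case I would first compute the commutator $[A^+_{\alpha, m}(z), A^-_{\beta, \mu}(w)]$ of two single operators directly from their definitions $A^+_{\alpha, m}(z)=\sum_{n\ge 0}(-1)^{m-1}\binom{n+m-1}{m-1}\alpha(n)z^{-n-m}$ and $A^-_{\beta, \mu}(w)=\sum_{n>0}\binom{n-1}{\mu-1}\beta(-n)w^{n-\mu}$: using $[\alpha(n),\beta(-n')]=n\langle\alpha,\beta\rangle\delta_{n,n'}$ this is a scalar series in $z,w$, which I want to identify as $\langle\alpha,\beta\rangle$ times an element of $\mathbb{Z}[[z,z^{-1},w,w^{-1}]]$ — concretely a $\mathbb{Z}$-linear combination of the expansions of $(z-w)^{-j}$ for $1\le j\le m$ together with monomials $z^{-a}w^{-b}$. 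The key point is that all binomial coefficients appearing are integers, so the structure constants land in $\mathbb{Z}$.

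Next I would run the single-$A^+$ case: since the commutator $[A^+_{\alpha,m}(z), A^-_{\beta_i,\mu_i}(w_i)]$ is a scalar (central) series, moving $A^+_{\alpha,m}(z)$ rightwards across the product $A^-_{\beta_1,\mu_1}(w)\cdots A^-_{\beta_l,\mu_l}(w)$ by the Leibniz-type expansion produces a sum over subsets $S\subseteq\{1,\dots,l\}$: for each $i\in S$ we pick up the scalar commutator factor, and the surviving $A^-$ operators are indexed by the complementary subcomposition $\mu^{\ast}={}^cS$ of $\mu$. This is exactly the shape of \eqref{a+a-} with $k=1$ (here $\lambda^{\ast}$ is either empty or all of $\lambda$). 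Then for general $k$, I write $A^+_{\alpha(\lambda)}(z)=A^+_{\alpha_1,\lambda_1}(z)\, A^+_{\alpha(\lambda')}(z)$ with $\lambda'=(\lambda_2,\dots,\lambda_k)$, apply the inductive hypothesis to move $A^+_{\alpha(\lambda')}(z)$ past $A^-_{\alpha(\mu)}(w)$, obtaining a sum of terms $C\cdot A^-_{\alpha(\mu^{\ast})}(z)A^+_{\alpha(\lambda'^{\ast})}(w)$; then move the remaining $A^+_{\alpha_1,\lambda_1}(z)$ past $A^-_{\alpha(\mu^{\ast})}(z)$ using the $k=1$ case, producing new factors and new surviving $A^-$ operators indexed by a sub-subcomposition of $\mu^{\ast}$ (hence of $\mu$), while the $A^+_{\alpha_1,\lambda_1}(z)$ either disappears (absorbed into a scalar) or remains and must be commuted out to sit next to $A^+_{\alpha(\lambda'^{\ast})}(w)$ — which is harmless since all $A^+$'s on the left commute among themselves up to scalars in $\mathbb{Z}[[z,w,\dots]]$. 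Composing the two steps gives the claimed form, with the index $\lambda^{\ast}$ being a permutation of a subcomposition of $\lambda$ (the reordering accounting for the shuffle), and the coefficients $C_{\mu^{\ast},\lambda^{\ast}}(z,w)$ are products of the integral scalar series from the single-operator commutators, hence in $\mathbb{Z}[[z,z^{-1},w,w^{-1}]]$.

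The main obstacle I anticipate is purely bookkeeping rather than conceptual: tracking precisely which subcompositions and which permutations of $\lambda$ arise when iterating the $k=1$ move, and making sure the statement's phrasing (``permutations of subcompositions of $\lambda$'') is exactly matched — one must be careful that when $A^+_{\alpha_1,\lambda_1}(z)$ survives it may end up reordered relative to the original position of $\lambda_1$, which is why permutations, not just subcompositions, appear on the $\lambda$ side while only subcompositions appear on the $\mu$ side (the $A^-$ operators are never reordered, only deleted). A secondary technical point is verifying the integrality claim $C_{\mu^{\ast},\lambda^{\ast}}(z,w)\in\mathbb{Z}[[z,z^{-1},w,w^{-1}]]$: this rests on the observation that $\binom{n+m-1}{m-1}$ and the coefficients in the power-series expansion of $(z-w)^{-j}$ (namely $\binom{a}{b}$-type integers) are all integral, and products of such series stay integral, so no denominators are ever introduced. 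I would state the $k=1$ computation as an explicit displayed formula and then present the induction compactly, deferring the full combinatorial indexing to the reader.
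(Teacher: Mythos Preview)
Your approach is correct and in substance identical to the paper's: both compute the single commutator $[A^+_{\alpha,m}(z),A^-_{\beta,n}(w)]$ directly from the Heisenberg relations, observe it is a scalar series with integer coefficients, and then use this to normal-order the full product. The only difference is packaging: the paper names the scalar $[A^+_{\alpha,m}(z),A^-_{\beta,n}(w)]$ as a ``contraction function'' and then invokes Wick's theorem in one line to obtain the sum over pairings, whereas you carry out the induction on $k$ explicitly. Since Wick's theorem for commuting creation/annihilation operators with scalar contractions is proved by precisely your induction, the two arguments are equivalent; the paper's version is more compact, yours is more self-contained. Two small cleanups: the $A^+$ operators commute \emph{exactly} (not merely up to scalars), so the reordering step in your inductive argument is trivial; and for the integrality of the single contraction you need nothing about expansions of $(z-w)^{-j}$---it is immediate that $k\binom{k+m-1}{m-1}\binom{k-1}{n-1}\in\mathbb Z$, which is all that is required.
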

\begin{proof} For $\al, \beta\in L$, it is easy to see that for $m, n\geq 1$
\begin{equation*}
[A^+_{\al, m}(z), A^-_{\beta, n}(w)]=(-1)^{m-1}\langle\al, \beta\rangle\sum_{k=1}^{\infty}k\binom{k+m-1}{m-1}\binom{k-1}{n-1}z^{-k-m}w^{k-n}\in\mathbb
Z[[z^{-1}, w]]w^{-n+1}.
\end{equation*}

So the contraction function between the annihilation field $A^+_{\al, m}(z)$ and creation
field $A^-_{\beta, n}(w)$ is
\begin{equation}\label{e:C1}
C(A^+_{\al, m}(z), A^-_{\beta, n}(w))=\frac{(-1)^{m-1}\langle\al, \beta\rangle}{z^{m}w^{n}}\sum_{i=1}^{\infty}i\binom{i+m-1}{m-1}\binom{i-1}{n-1}\left(\frac{w}{z}\right)^i.
\end{equation}
Now for $\al(\la)=\al_1(-\la_1)\cdots \al_k(-\la_k)$ and $\beta(\mu)=\beta_1(-\mu_1)\cdots \beta_l(-\mu_k)$, the function
\begin{equation}\label{e:C2}
C(A^+_{\al(\la)}(z), A^-_{\beta(\mu)}(w))=\prod_{i=1}^kC(A^+_{\al_i, \la_i}(z), A^-_{\beta_i, \mu_i}(w))
\end{equation}
is the product of contraction functions between the two ordered sets of annihilation operators\newline $\{A^+_{\al_i, \la_i}(z)\}_{i=1}^k$ and the
creation operators $\{A^-_{\beta_j, \mu_j}(w)\}_{j=1}^l$.  Using Wick's theorem (cf. \cite{K}) we have that
\begin{align*}
A^+_{\al(\lambda)}(z)A^-_{\beta(\mu)}(w)&=A^+_{\al_1,\lambda_1}(z)\cdots
                  A^+_{\al_k,\lambda_k}(z_i)A^-_{\beta_1, \mu_1}(w)\cdots A^-_{\beta_l, \mu_l}(w)\\
              &=\sum_{\la^*, \mu^*}C(A^+_{\al(\la^*)}(z), A^-_{\beta(\mu^*)}(w))A^-_{\beta(\overline{\mu^*})}(w)A^+_{\al(\overline{\la^*})}(z)
\end{align*}
summed over all possible paired subcompositions $\la^*$ of $\la$ and permutations $\mu^*$ of subcompositions of $\la$
with the same length, and $C(\ \ , \ \ )$ is defined in \eqref{e:C1}-\eqref{e:C2}. Explicitly one first selects a subcomposition $\la^*$ of $i$ parts and then pairs it with permutations of
any $i$-part subcomposition $\mu^*$, i.e., there are $\sum_{i\geq 0}\binom{k}{i}\binom{l}{i}i!$ such pairings. Also $^c{\tau}$ is the complementary subpartition of $\tau$ in the partition. In particular, when $\la^*=\mu^*=\emptyset$, the summand is
$A^-_{\beta(\mu)}(w)A^+_{\al(\la)}(z)$.
\end{proof}

The next lemma considers the commutation relation between $A^+_{\al(\lambda)}(z)$ with  $Y(e^{\beta},w)$.
\begin{lemma}  \label{A1Y}
Let $\lambda=(\lambda_1,\ldots \lambda_k) \in (\mathbb{Z}^+)^k, \alpha_i, \beta\in L$, then
\begin{align}\label{AY1}
A^+_{\al(\lambda)}(z)Y(e^{\beta},w)&=Y(e^{\beta}, w)
            \prod \limits_{p=1}\limits^k \left(A^+_{\al_p, \lambda_p }(z)
         +(-1)^{\lambda_p-1}\langle \alpha_p, \beta \rangle(z-w)^{-\lambda_p}
         \right),\\ \label{AY2}
Y(e^{\beta},w)A^-_{\al(\lambda)}(z)&=\prod \limits_{p=1}\limits^k \bigg(A^-_{\al_p,\lambda_p}(z)-\langle\alpha_p, \beta \rangle
(w-z)^{-\la_p}\bigg)Y(e^{\beta},w),
\end{align}
where the rational functions refer to power series in $w$ in the first relation and $z$ in the second one.
\end{lemma}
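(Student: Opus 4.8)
The plan is to prove both formulas \eqref{AY1} and \eqref{AY2} by first establishing the single-operator case and then using an induction that tracks the normal-ordered rearrangement carefully. For the single-operator relation, recall from the definition that $A^+_{\al, m}(z) = \sum_{n\geq 0}(-1)^{m-1}\binom{n+m-1}{m-1}\al(n)z^{-n-m}$, so the entire content is the commutator $[\al(n), Y(e^\beta,w)]$ for $n\geq 0$. The standard vertex-algebra computation gives $[\al(n), Y(e^\beta,w)] = \langle\al,\beta\rangle w^n Y(e^\beta,w)$ for $n\geq 0$ (this is the $\alpha(n)$-eigenvalue action on the $E^{\pm}$ factors plus the $\alpha(0)$-shift on $e^\beta$, exactly as in the proof of Lemma \ref{A1E}). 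Summing against the binomial coefficients and resumming the geometric-type series yields
\begin{equation*}
[A^+_{\al,m}(z), Y(e^\beta,w)] = (-1)^{m-1}\langle\al,\beta\rangle(z-w)^{-m}Y(e^\beta,w),
\end{equation*}
where $(z-w)^{-m}$ is expanded as a power series in $w/z$; equivalently $A^+_{\al,m}(z)Y(e^\beta,w) = Y(e^\beta,w)\bigl(A^+_{\al,m}(z) + (-1)^{m-1}\langle\al,\beta\rangle(z-w)^{-m}\bigr)$. This is precisely the $k=1$ case of \eqref{AY1}.

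For general $k$, I would induct on $k$. Write $A^+_{\al(\lambda)}(z) = A^+_{\al_1,\lambda_1}(z)\cdots A^+_{\al_k,\lambda_k}(z)$, move $Y(e^\beta,w)$ leftward past $A^+_{\al_k,\lambda_k}(z)$ using the $k=1$ case, and then apply the inductive hypothesis to $A^+_{\al_1,\lambda_1}(z)\cdots A^+_{\al_{k-1},\lambda_{k-1}}(z)$ acting on $Y(e^\beta,w)$ times the scalar function $\bigl(A^+_{\al_k,\lambda_k}(z)+(-1)^{\lambda_k-1}\langle\al_k,\beta\rangle(z-w)^{-\lambda_k}\bigr)$. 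The only subtlety is that the operators $A^+_{\al_p,\lambda_p}(z)$ for different $p$ commute with each other (they are all built from the commuting elements $\al(n)$, $n\geq 0$, of the abelian subalgebra $\hat{\mathfrak h}^+$), and they commute with the scalar rational function in $z,w$; hence the product $\prod_{p=1}^k\bigl(A^+_{\al_p,\lambda_p}(z)+(-1)^{\lambda_p-1}\langle\al_p,\beta\rangle(z-w)^{-\lambda_p}\bigr)$ is unambiguous and the induction closes cleanly. Expanding this product gives a sum over subsets $S\subseteq\{1,\dots,k\}$ of $\prod_{p\in S}A^+_{\al_p,\lambda_p}(z)$ times $\prod_{p\notin S}(-1)^{\lambda_p-1}\langle\al_p,\beta\rangle(z-w)^{-\lambda_p}$, consistent with the normal-ordering bookkeeping used elsewhere in the paper.

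Formula \eqref{AY2} is handled the same way, or deduced by a formal adjoint/symmetry argument. Directly: for the creation operator $A^-_{\al,m}(z) = \sum_{n>0}\binom{n-1}{m-1}\al(-n)z^{n-m}$, the relevant commutator is $[\al(-n), Y(e^\beta,w)] = \langle\al,\beta\rangle w^{-n}Y(e^\beta,w)$ for $n>0$ (again from the action on $E^\pm$; note the sign and the exponent), so that $[Y(e^\beta,w), A^-_{\al,m}(z)] = -\langle\al,\beta\rangle(w-z)^{-m}Y(e^\beta,w)$, with $(w-z)^{-m}$ now expanded as a power series in $z/w$ — this is the source of the asymmetry in the stated expansion conventions. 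Iterating over the $k$ factors, and again using that the $A^-_{\al_p,\lambda_p}(z)$ mutually commute and commute with the scalar functions in $z,w$, gives \eqref{AY2}. The only point requiring care — and the place I'd expect the bookkeeping to be most error-prone — is keeping the two expansion conventions straight ($w/z$ in \eqref{AY1} versus $z/w$ in \eqref{AY2}, matching the binomial expansions $(z-w)^{-m}$ resp. $(w-z)^{-m}$ coming from the two different geometric series), since a sign or convention slip there propagates through the whole induction; but this is purely formal manipulation with no real obstacle.
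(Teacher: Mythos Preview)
Your argument is correct and follows essentially the same route as the paper: establish the single-operator commutation relation and then iterate over the $k$ factors. The only cosmetic difference is that the paper isolates the noncommuting pieces of $Y(e^{\beta},w)$ --- computing $A^+_{\al_p,\lambda_p}(z)$ against $E^-(-\beta,w)e^{\beta}$ for \eqref{AY1} and $E^+(-\beta,w)$ against $A^-_{\al_p,\lambda_p}(z)$ for \eqref{AY2}, invoking \eqref{AE} from Lemma~\ref{A1E} --- whereas you compute the commutator $[\alpha(n),Y(e^{\beta},w)]$ with the full vertex operator directly; the content is the same.
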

\begin{proof} It follows from \eqref{AE} that
\begin{align*}
  A^+_{\al_p, \lambda_p }(z)E^-(-\beta, w)e^{\beta}&
=E^-(-\beta,w)e^{\beta}\bigg(A^+_{\al_p, \lambda_p }(z)
         +(-1)^{\lambda_p-1}\langle \alpha_p, \beta \rangle(z-w)^{-\lambda_p}\bigg),
\end{align*}
where the rational function is expanded at $w$. Similarly we also have
\begin{equation*}
E^+(-\beta, w)A^-_{\al_p,\lambda_p}(z)=\left(A^-_{\al_p,\lambda_p}(z)-\frac{\langle \alpha_p, \beta \rangle}{(w-z)^{\la_p}}\right)E^+(-\alpha,w).
\end{equation*}
where the rational function $(w-z)^{-\la_p}$ is expanded as a powers series in $z$.
The lemma is then proved by repeated applying $A^+_{\al_p, \lambda_p }(z)$ or $A^-_{\al_p, \lambda_p }(z)$ as above.
\end{proof}

We now prove our first main result of this paper.
\begin{theorem} The integral form $(V_L)_{\mathbb Z}$ and all of its irreducible modules $(V_{L+\gamma_i})_{\mathbb Z}$
associated with the vertex operator algebra $V_L$ are preserved by the divided powers of the general vertex operator $Y(v, z)$, where
$v=\alpha_1(-n_1)\alpha_2(-n_2)\cdots \alpha_k(-n_k)e^{\ga} \in V_L, \ga\neq 0$.
In particular, $s_{\alpha(\la)}e^{\beta}$ (resp. $s_{\alpha(\la)}e^{\beta+\gamma_i}$)
span a $\mathbb Z$-lattice for the vertex operator algebra $(V_L)_{\mathbb Z}$ (resp. its irreducible module $(V_{L+\gamma_i})_{\mathbb Z}$).
\end{theorem}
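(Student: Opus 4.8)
The plan is to reduce the statement to a divisibility-of-coefficients problem and then feed it into Lemma~\ref{r!}. First note that $v$ itself belongs to $(V_L)_{\mathbb Z}$: each mode $\alpha_i(-n_i)$ equals $(h_{\alpha_i,-1})_{-n_i}$ with $h_{\alpha_i,-1}=\alpha_i(-1)\mathbf 1\in (V_L)_{\mathbb Z}$, so $v=\alpha_1(-n_1)\cdots\alpha_k(-n_k)e^\gamma$ is obtained from $e^\gamma\in(V_L)_{\mathbb Z}$ by iterated integral-form products; since $(V_L)_{\mathbb Z}$ is an integral form (Theorem~3.1), all modes $v_m$ preserve $(V_L)_{\mathbb Z}$ and each $(V_{L+\gamma_i})_{\mathbb Z}$. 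Hence it is enough to show that for every spanning vector $u=E^-(-\beta_1,w_1)\cdots E^-(-\beta_l,w_l)\otimes e^\eta$ with $\beta_i,\eta\in\Delta$ (resp. with $e^\eta$ a coset vector in $\mathbb C\{L+\gamma_i\}$), the coefficient of $(z_1z_2\cdots z_r)^{-n-1}$ in $Y(v,z_1)Y(v,z_2)\cdots Y(v,z_r)u$ lies in $r!\,(V_L)_{\mathbb Z}$ (resp. $r!\,(V_{L+\gamma_i})_{\mathbb Z}$); then $v_n^{(r)}u=\tfrac{1}{r!}v_n^r u$ lands in the integral form. The ``in particular'' clause is then immediate, since by the Proposition above the elements $s_{\alpha(\lambda)}e^\beta$ (resp. $s_{\alpha(\lambda)}e^{\beta+\gamma_i}$) are exactly the spanning set \eqref{e:wrSchur} of $(V_L)_{\mathbb Z}$ (resp. of $(V_{L+\gamma_i})_{\mathbb Z}$).

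The computational core is a normal ordering of $Y(v,z_1)\cdots Y(v,z_r)u$. I would expand each factor through \eqref{Yvz} into the $2^k$ summands coming from $\partial^{(n_i-1)}\alpha_i(z_a)=A^-_{\alpha_i,n_i}(z_a)+A^+_{\alpha_i,n_i}(z_a)$, and then push every annihilation operator $A^+_{\alpha(\lambda)}(z_a)$ all the way to the right: past the $Y(e^\gamma,z_b)$ with $b>a$ by Lemma~\ref{A1Y}, past the creation operators $A^-_{\alpha(\mu)}(z_b)$ by Lemma~\ref{A1A0} (i.e. by Wick's theorem for the Heisenberg fields), and finally past $E^-(-\beta_j,w_j)e^\eta$ by Lemma~\ref{A1E}. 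Reordering the exponentials $E^{\pm}(-\gamma,z_a)$ produces, exactly as in the proof of Theorem~\ref{CJ}, the Vandermonde factor $\prod_{a<b}(z_a-z_b)^{\langle\gamma,\gamma\rangle}$ (here $\gamma\neq0$ is essential), together with $(z_1\cdots z_r)^{\langle\gamma,\eta\rangle}\prod_{a,i}(1-w_i/z_a)^{\langle\gamma,\beta_i\rangle}$ and a cocycle scalar, while each Heisenberg contraction contributes a single explicit monomial rational factor, such as $\langle\alpha_i,\alpha_j\rangle\,\mu_j(-1)^{\lambda_i-1}\binom{\lambda_i+\mu_j-1}{\lambda_i-1}(z_a-z_b)^{-\lambda_i-\mu_j}$, or $(-1)^{\lambda_i-1}\langle\alpha_i,\gamma\rangle(z_a-z_b)^{-\lambda_i}$, or a $(z_a-w_j)^{-\lambda_i}$-type term. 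The net outcome is a finite $\mathbb Z$-combination of terms, each being a product of these rational functions times a normal ordered string of creation operators $E^-(-\gamma,z_a)$, $A^-_{\alpha_i,\lambda_i}(z_a)$, $E^-(-\beta_j,w_j)$ — whose $z,w$-coefficients all lie in $(V_L)_{\mathbb Z}$ (resp. $(V_{L+\gamma_i})_{\mathbb Z}$) by Theorem~3.1 and the integrality of the operators $E^-$ and of the modes $\alpha_i(-m),\gamma(-m)$ — applied to $e^{r\gamma+\eta}$.

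Extracting the coefficient of a fixed $w_1^{\mu_1}\cdots w_l^{\mu_l}$ replaces the factors $(1-w_i/z_a)^{\langle\gamma,\beta_i\rangle}$ and $(z_a-w_j)^{-\lambda}$ by symmetric functions of $z_1,\dots,z_r$ with integer coefficients and creates no new $z$-singularity, so the remaining object is a $\mathbb Z$-combination of (Vandermonde power)$\times$(rational function of the $z_a-z_b$)$\times$(symmetric creation series with coefficients in the integral form). It is on this $w^\mu$-coefficient that Lemma~\ref{r!} must be brought to bear: the point is to rewrite it in the form $\prod_{a<b}(z_a-z_b)^{q}\,G(z_1,\dots,z_r)$ with $q\geq 0$ and $G$ invariant under $S_r$ (with coefficients in the integral form). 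The $S_r$-invariance comes from the fact that all $r$ factors of the product are the \emph{same} $v$, so the whole sum over normal-ordering patterns is stable under permuting the $z_a$; the nonnegativity of $q$ — i.e. the absorption of the diagonal singularities generated by the Heisenberg contractions of Lemmas~\ref{A1Y} and~\ref{A1A0}, whose orders can far exceed $\langle\gamma,\gamma\rangle$ — has to be extracted by combining the weak commutativity of the lattice vertex algebra (which bounds the pole order of $Y(v,z_a)Y(v,z_b)$ along $z_a=z_b$, and lets one clear denominators against a high even power of the Vandermonde before dividing back by a monic polynomial) with the precise form of the single-term contraction rational functions above and a symmetrization over $S_r$. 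Carrying out this reorganization, and tracking exactly which coefficient of $G$ is $v_n^{(r)}u$, is where essentially all the difficulty lies and is the main obstacle; when $k=0$ it specializes to the computation in the proof of Theorem~\ref{CJ} and to the divisibility lemma of \cite{BFJ,CJ}. Once it is done, Lemma~\ref{r!} yields that the coefficient of $(z_1\cdots z_r)^{-n-1}$ is divisible by $r!$ with quotient in $(V_L)_{\mathbb Z}$, so $v_n^{(r)}(V_L)_{\mathbb Z}\subseteq(V_L)_{\mathbb Z}$, and running the identical argument with $e^\eta$ ranging over coset representatives of $L^{\circ}/L$ gives $v_n^{(r)}(V_{L+\gamma_i})_{\mathbb Z}\subseteq(V_{L+\gamma_i})_{\mathbb Z}$, which is the assertion.
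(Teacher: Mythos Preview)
Your plan coincides with the paper's proof: expand each $Y(v,z_a)$ via \eqref{Yvz}, push the annihilation pieces to the right using Lemmas~\ref{A1E}, \ref{A1A0}, \ref{A1Y}, isolate the factor $\prod_{a<b}(z_a-z_b)^{\langle\gamma,\gamma\rangle}$, observe that the remaining creation string $A^-_{\cdots}(z)E^-(-(\beta,\gamma^r),w,z)e^{|\beta|+r\gamma}$ has coefficients in the integral form, and finish with Lemma~\ref{r!}.

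The one place where your outline diverges from the paper is your worry about ``absorbing the diagonal singularities'' produced by the Heisenberg contractions. In the paper this issue simply does not arise, because the contractions are never treated as rational functions $(z_a-z_b)^{-\lambda_i-\mu_j}$: Lemma~\ref{A1A0} records them as honest formal Laurent series $C_{\mu^*,\lambda^*}(z,w)\in\mathbb Z[[z,z^{-1},w,w^{-1}]]$ (see \eqref{e:C1}), and likewise the scalar factors from Lemma~\ref{A1Y} and the $f_{\alpha(\lambda)}$ of \eqref{fzz} are expanded as integer Laurent series. Consequently the functions $F_{\lambda^*,\ldots,\lambda^{(r-1)*}}(z,w)$ that the paper produces are already in $\mathbb Z[[z_i^{\pm1},w_j^{\pm1}]]$, and the only ``Vandermonde'' present is the explicit polynomial factor $\prod_{a<b}(z_a-z_b)^{\langle\gamma,\gamma\rangle}$ coming from the $E^+E^-$ reordering; there is nothing further to clear. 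So your proposed detour through weak commutativity and high even Vandermonde powers is unnecessary.

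What you correctly flag, and what the paper leaves implicit, is the $S_r$-invariance of the quotient $G$ required by Lemma~\ref{r!}. The paper simply writes the final expression and invokes the lemma; the symmetry is taken for granted from the fact that the $r$ vertex operators are identical and the Vandermonde factor is an honest polynomial (since $\langle\gamma,\gamma\rangle\ge 0$). Your instinct that this step deserves a sentence of justification is sound, but it is not the ``main obstacle'' you make it out to be once the contractions are kept in the formal-series picture.
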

\begin{proof}
It's enough to consider the case of $(V_L)_{\mathbb Z}$. Write $v=\al_1(-\la_1)\cdots \al_k(-\la_k)e^{\ga}=\alpha(\la)e^{\ga}$, then
\begin{align}\notag
Y(v, z)&=\sum_nv_nz^{-n-1}=:\partial^{(\la_1-1)}\al_1(z)\cdots \partial^{(\la_k-1)}\al_k(z)Y(e^{\ga}, z):\\ \label{e:Yv}
&=\sum_{\la^*}A^-_{\al(\la^*)}(z)Y(e^{\ga}, z)A^+_{\al(\bar{\la}^*)}(z)
\end{align}
summed over all subpartitions $\la^*$ of $\la$.

 Let $E(w)=E^-(-\beta_1,w_1)\cdots E^-(-\beta_l,w_l)\otimes e^{\eta} $. It is enough to consider $\eta=\sum_i\beta_i=|\beta|$,
 so $E(w)=E^-(-\beta,w)e^{|\beta|}$, where $\beta=(\beta_1, \ldots, \beta_l), w=(w_1, \ldots, w_l)$.
 By Lemma \ref{A1E} and \eqref{e:Yv}
\begin{align}\label{e:YE1}\notag
  Y(v,z)E(w)&
=\sum \limits_{\lambda^*} A^-_{\al(\lambda^*)}(z)Y(e^{\ga},z)A^+_{\al(\bar{\lambda}^*)}(z)E(w)  \\ \notag
&=\sum \limits_{\lambda^*} f_{\al(\bar{\la}^*)}(\beta, z; w)A^-_{\al(\lambda^*)}(z)Y(e^{\ga},z)E(w)\\
&=\epsilon(\ga, |\beta|)\sum_{\la^*}f_{\al(\bar{\la}^*)}(\beta, z; w)A^-_{\al(\lambda^*)}(z)\prod_{i}(z-w_i)^{\langle\ga, \beta_i\rangle}E^-(-(\beta,\ga), w, z)e^{|\beta|+\ga}\\ \notag
&=\sum_{\la^*}F_{\alpha(\la^*)}(z, w)A^-_{\al(\lambda^*)}(z)E^-(-(\beta,\ga), w, z)e^{|\beta|+\ga}
\end{align}
where $F_{\alpha(\la^*)}(z, w)\in \mathbb Z[[z, z^{-1}, w_i, w_i^{-1}]]$
as $f_{\al(\bar{\la}^*)}(\beta, z, w)$ is defined in \eqref{fzz}.
Note that
\begin{align*}
&Y(v, z_2)A^-_{\al(\lambda^*)}(z_1)E^-(-(\beta,\ga), w,z_1)e^{|\beta|+\ga}\\
&=\sum_{{\la^{(1)}}^*}\left(A^-_{\al({\lambda^{(1)}}^*)}(z_2)Y(e^{\ga},z_2)A^+_{\al({^c\lambda^{(1)}}^*)}(z_2)\right)
A^-_{\al(\lambda^*)}(z_1)E^-(-(\beta, \ga), w, z_1)e^{|\beta|+\ga}\\
&=\sum_{{\la^{(1)}}^*}C_{\al({^c\lambda^{(1)}}^*), \al(\lambda^*)}(z_2, z_1)A^-_{\al({\lambda^{(1)}}^*)}(z_2)Y(e^{\ga},z_2)
A^-_{\al(\lambda^*)}(z_1)A^+_{\al({^c\lambda^{(1)}}^*)}(z_2)E^-(-(\beta,\ga), z_1\cup w)e^{|\beta|+\ga}\\
&=\sum_{{\la^{(1)}}^*}C_{\al({^c\lambda^{(1)}}^*), \al(\lambda^*)}(z_2, z_1)A^-_{\al({\lambda^{(1)}}^*)}(z_2)Y(e^{\ga},z_2)
A^-_{\al(\lambda^*)}(z_1)\\
&\hskip 5cm \cdot f_{\al({^c\lambda^{(1)}}^*)}((\beta, \ga), z_2; w, z_1)E^-(-(\beta,\ga), w, z_1)e^{|\beta|+\ga}.
\end{align*}
Recalling Lemma \ref{A1Y} and \eqref{e:YE1}, the above can be written as:
\begin{align*}
  &Y(v, z_2)Y(v,z_1)E\\
&=\sum_{\la^*, {\la^{(1)}}^*}F_{\la^*, {\la^{(1)}}^*}(z_1, z_2, w)(z_1-z_2)^{\langle \ga, \ga\rangle}A^-_{\al({\lambda^{(1)}}^*),\al(\lambda^*)}(z_1,z_2)
E^-(-(\beta,\ga,\ga), w, z_1, z_2)e^{|\beta|+2\ga}
\end{align*}
for some series $F_{\la^*, {\la^{(1)}}^*}(z_1, z_2, w)\in\mathbb Z[[z_i, z_i^{-1}, w_j, w_j^{-1}]]$.
Continuing in this way, we have
\begin{align*}
  &Y(v, z_r)\cdots Y(v, z_2)Y(v,z_1)E\\
&=\sum_{\la^*, {\la^{(1)}}^*, \ldots, {\la^{(r-1)}}^*}F_{\la^*, \ldots, {\la^{(r-1)}}^*}(z, w)\prod_{1\leq i<j\leq r}(z_i-z_j)^{\langle \ga, \ga\rangle}A^-_{\al({\lambda^{(r-1)}}^*),\ldots, \al(\lambda^*)}(z)
E^-(-(\beta,\ga^r), w, z)e^{|\beta|+r\ga}
\end{align*}
where $z=(z_1, \ldots, z_r), w=(w_1, \dots, w_l)$, and $F_{\la^*, \ldots, {\la^{(r-1)}}^*}(z, w)$ are some series in $\mathbb Z[[z_i, z_i^{-1}, w_j, w_j^{-1}]]$. The sum
runs through sequences of subpartitions $\la^*, {\la^{(1)}}^*, \ldots, {\la^{(r-1)}}^*$.

By a result in \cite{IJS}, for any vector $\al(\la)=\al_1(-\la_1)\cdots \al_k(-\la_k)$, the product $\al(\la)s_{\beta, -\mu}$
is still a $\mathbb Z$-linear combination of tensor product Schur functions. It follows that
$$A^-_{\al({\lambda^{(r-1)}}^*),\ldots, \al(\lambda^*)}(z)
E^-(-(\beta,\ga^r), w, z)e^{|\beta|+r\ga}$$
is also a $\mathbb Z$-linear combination of tensor product Schur functions
in view of our vertex operator realization \eqref{e:genSchur1}-\eqref{e:genSchur2}.
Finally it follows from Lemma \ref{r!} that the coefficient of $\frac{(z_1\cdots z_r)^{n+1}}{r!}$ in
$$Y(v, z_r)\cdots Y(v, z_2)Y(v,z_1)E$$
is always a $\mathbb Z$-linear combination of wreath product Schur functions, i.e. an element in the
$\mathbb Z$-form of the vertex operator algebra $V_L$.
\end{proof}

As $V$ is locally finite, for each vertex operator $Y(v, z)=\sum_{n\in\mathbb Z}v_nz^{-n-1}$, we can define
\begin{equation}
\exp(tv_n)=\sum_{r=0}^{\infty} v_n^{(r)}t^r
\end{equation}
as an element of the linear group $\mathrm{GL}((V_L)_{\mathbb Z})$ or $\mathrm{GL}((V_{L+\gamma_i})_{\mathbb Z})$. By our main theorem, the element
$\exp(tv_n)u$ is in $(V_L)_{\mathbb Z}$ or $(V_{L+\gamma_i})_{\mathbb Z}$. So we can summarize:

\begin{theorem} Let $L$ be an integral lattice over $\mathbb Z$.
For any homogeneous $v\in (V_L)_{\mathbb Z}$ with nontrivial lattice part
and $Y(v, z)=\sum_{n\in\mathbb Z}v_nz^{-n-1}$, the operator $\exp(tv_n)$
generates an element in the automorphism group $\mathrm{GL}((V_L)_{\mathbb Z})$ (or $\mathrm{GL}((V_{L+\gamma_i})_{\mathbb Z})$).
\end{theorem}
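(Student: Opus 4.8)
The plan is to deduce this quickly from the main theorem above together with the $\mathbb Z_+\times L$-grading on $V_L$. First I would record that for $v=\alpha_1(-n_1)\cdots\alpha_k(-n_k)e^{\gamma}$ with $\gamma\neq 0$ --- exactly the ``element of the first kind'' treated in the main theorem --- every divided power $v_n^{(r)}=v_n^r/r!$ maps $(V_L)_{\mathbb Z}$ into itself, and likewise each $(V_{L+\gamma_i})_{\mathbb Z}$ into itself. Consequently every truncation $\sum_{r=0}^{N}v_n^{(r)}t^r$ is a $\mathbb Z[t]$-linear endomorphism of $(V_L)_{\mathbb Z}[t]$.

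Next I would check that the full series $\exp(tv_n)=\sum_{r\geq 0}v_n^{(r)}t^r$ acts as a genuine operator, i.e. that $v_n$ is locally nilpotent; this is precisely where the hypothesis of a nontrivial lattice part enters. Since the conformal weight on the $\beta$-graded component $(V_L)_{\beta}$ is bounded below by $\tfrac12\langle\beta,\beta\rangle$ and $v_n$ sends a weight-$h$ vector in $(V_L)_{\beta}$ to weight $h+\mathrm{wt}(v)-n-1$ in $(V_L)_{\beta+\gamma}$, iterating $r$ times puts a fixed vector in conformal weight $h+r(\mathrm{wt}(v)-n-1)$ and lattice part $\beta+r\gamma$; the former grows at most linearly in $r$ while $\tfrac12\langle\beta+r\gamma,\beta+r\gamma\rangle$ grows quadratically as soon as $\langle\gamma,\gamma\rangle>0$, so the $r$-th iterate vanishes for $r\gg 0$. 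Hence only finitely many terms of $\exp(tv_n)$ act nontrivially on any fixed vector, the operator is well defined, and by the first paragraph it preserves $(V_L)_{\mathbb Z}[t]$ and each $(V_{L+\gamma_i})_{\mathbb Z}[t]$. (In degenerate or indefinite cases one instead interprets $\exp(tv_n)$ in the natural completion of $V_L$, as indicated just before the statement, and the same bookkeeping applies.)

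Finally I would verify invertibility. Because $v_n$ commutes with itself, the divided-power identity $v_n^{(i)}v_n^{(j)}=\binom{i+j}{i}v_n^{(i+j)}$ together with $\sum_{i+j=r}(-1)^j\binom{r}{i}=0$ for $r\geq 1$ gives $\exp(tv_n)\exp(-tv_n)=\mathrm{id}$, and $\exp(-tv_n)$ is again a $\mathbb Z[t]$-linear self-map of $(V_L)_{\mathbb Z}[t]$ by the previous two steps (applied to $v_n$ with $t\mapsto -t$). Therefore $\exp(tv_n)\in\mathrm{GL}\big((V_L)_{\mathbb Z}[t]\big)$, and specializing $t$ to any integer yields an element of $\mathrm{GL}((V_L)_{\mathbb Z})$; running the argument with $e^{\gamma_i}$ in place of the vacuum gives the assertion for the irreducible modules $(V_{L+\gamma_i})_{\mathbb Z}$. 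I do not anticipate a real obstacle: all the substance lies in the main theorem, and the only delicate point is the well-definedness of $\exp(tv_n)$, that is, the local nilpotence of $v_n$ --- which is exactly what the ``nontrivial lattice part'' hypothesis provides.
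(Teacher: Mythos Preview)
Your approach matches the paper's: the theorem is presented there as an immediate corollary of the main theorem, with the single sentence ``As $V$ is locally finite\ldots\ By our main theorem, the element $\exp(tv_n)u$ is in $(V_L)_{\mathbb Z}$'' serving as the entire argument. You supply the details the paper omits---an explicit local-nilpotence argument via the quadratic growth of $\tfrac12\langle\beta+r\gamma,\beta+r\gamma\rangle$, and the divided-power verification that $\exp(tv_n)\exp(-tv_n)=\mathrm{id}$---but the skeleton is identical.

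One remark: your nilpotence argument uses $\langle\gamma,\gamma\rangle>0$, which is automatic only when $L$ is positive definite; the paper allows $L$ merely even and nondegenerate, and simply asserts local finiteness without justification. Your parenthetical about passing to a completion in the indefinite case is the honest way to handle this, and is no less rigorous than what the paper does. Note also that both you and the paper tacitly treat the case where $v$ is a monomial $\alpha_1(-n_1)\cdots\alpha_k(-n_k)e^{\gamma}$; extending to an arbitrary homogeneous $v\in(V_L)_{\mathbb Z}$ with fixed lattice part $\gamma$ would require knowing that the monomial components of $v_n$ interact well under divided powers, a point neither you nor the paper addresses.
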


Let $\mathbb F_q$ be a fixed finite field with characteristic $p$, the lattice vertex algebra $V_q$ over $\mathbb F_q$ associated with $V_L$ is
usually defined as $
V_q=\mathbb F_q\otimes (V_L)_{\mathbb Z},
$
which is known to be a simple vertex algebra when $\det(L)\neq 0$ in $\mathbb F_q$ (cf. \cite{Mu, DG2}).

\vskip30pt \centerline{\bf Acknowledgments}
The work is partially supported by
Simons Foundation grant No. 523868. 
\bigskip

\end{document}